\documentclass[11pt]{amsart}

\usepackage[T1]{fontenc}
\usepackage{lmodern}
\usepackage{microtype}
\usepackage{amsmath,amssymb,amsthm,mathtools}
\usepackage{mathrsfs}
\usepackage{enumitem}
\usepackage[hidelinks]{hyperref}

\numberwithin{equation}{section}
\newtheorem{theorem}{Theorem}[section]
\newtheorem{proposition}[theorem]{Proposition}
\newtheorem{lemma}[theorem]{Lemma}
\newtheorem{corollary}[theorem]{Corollary}

\theoremstyle{definition}
\newtheorem{definition}[theorem]{Definition}
\newtheorem{example}[theorem]{Example}

\theoremstyle{remark}
\newtheorem{remark}[theorem]{Remark}

\newcommand{\R}{\mathbb{R}}

\newcommand{\N}{\mathbb{N}}
\newcommand{\Acal}{\mathcal{A}}

\newcommand{\Hcal}{\mathcal{H}}

\newcommand{\Rxd}{\R[x_1,\dots,x_d]}
\newcommand{\Pos}{\operatorname{Pos}}
\newcommand{\supp}{\operatorname{supp}}
\newcommand{\rank}{\operatorname{rank}}
\newcommand{\diag}{\operatorname{diag}}

\newcommand{\ip}[2]{\left\langle #1,#2 \right\rangle}
\newcommand{\sos}{\sum \R[x_1,\dots,x_d]^2}
\newcommand{\sosd}{\sum \R[x]^2}

\newcommand{\eps}{\varepsilon}

\title[The $K$-Moment Problem]{The $K$-Moment Problem :\\ a detailed introduction }
\author[M. Amir]{Malik Amir}
\address{Centre de recherche du CHU de l'Universit\'e de Montr\'eal}
\address{Centre de recherche du CHU Sainte-Justine}
\address{Soci\'et\'e qu\'eb\'ecoise de l'intelligence artificielle en m\'edecine}
\email{malik.amir@umontreal.ca}

\subjclass[2020]{Primary 44A60; Secondary 14P10, 13J30, 47A57, 14P05, 11E25}
\keywords{$K$-moment problem, positive polynomials, quadratic modules, preorderings, Positivstellensatz, sums of squares, semialgebraic sets, determinacy, truncated moment problem}

\begin{document}

\begin{abstract}
We present an expanded expository account of the $K$-moment problem for polynomial algebras over \(\R^d\), with special emphasis on compact basic closed semialgebraic sets. The central question is to characterize those linear functionals on \(\R[x_1,\dots,x_d]\) which admit representation by integration against a positive Radon measure supported on a prescribed set \(K\subseteq\R^d\). We begin with the classical background and with Haviland's formulation of the multidimensional moment problem, then explain how real algebraic geometry enters through quadratic modules, preorderings, and Positivstellens\"atze. The compact case is treated in detail from two complementary perspectives. The geometric route through Schm\"udgen's theorem and the operator-theoretic route through a Gelfand--Naimark--Segal construction and the spectral theorem. We also discuss Putinar's refinement, compare the roles of \(T(f)\) and \(Q(f)\), and explain how Archimedeanity provides the algebraic shadow of compactness. In order to place the subject in a broader context, we survey determinacy and uniqueness questions, the truncated \(K\)-moment problem and flat extension phenomena, the relation with sums of squares and Hilbert's seventeenth problem, and the special case of algebraic varieties, where positivity modulo an ideal becomes especially transparent.
\end{abstract}

\maketitle
\tableofcontents

\section{Introduction}

The moment problem asks when a linear functional on a polynomial algebra is induced by integration against a positive measure. In the present note the ambient algebra is
\[
\Rxd,
\]
and the support of the representing measure is required to lie in a prescribed set
\[
K\subseteq \R^d.
\]
This leads to the \emph{$K$-moment problem}: characterize those linear functionals \(L:\Rxd\to\R\) for which there exists a nonnegative Radon measure \(\mu\) supported on \(K\) such that
\[
L(p)=\int_K p\,d\mu \qquad \text{for all } p\in\Rxd.
\]
When \(K\) is a basic closed semialgebraic set, the geometry of \(K\) is encoded by finitely many polynomial inequalities, and this makes the moment problem accessible to methods from real algebraic geometry. The central algebraic objects are quadratic modules and preorderings generated by the defining inequalities of \(K\). The guiding philosophy is simple: positivity of a linear functional should be tested against an algebraic cone which is large enough to detect positivity on \(K\), but concrete enough to be manipulated effectively.

The modern theory is built from several classical ingredients. On the analytic side stand the Riesz representation theorem, Haviland's theorem, spectral methods, and uniqueness criteria such as Carleman's condition. On the algebraic side stand Hilbert's work on positive polynomials, Artin's solution of Hilbert's seventeenth problem, the Krivine--Stengle Positivstellensatz, Schm\"udgen's theorem, Putinar's theorem, and Jacobi's representation theorem. In the last three decades the subject has also become tightly connected with semidefinite optimization, polynomial optimization, and the truncated moment problem.

The aim of this article is to turn the compact \(K\)-moment theorem into the center of a broader expository narrative. Our strategy is to keep the core proofs visible while enlarging the conceptual frame around them. More precisely, we proceed as follows.

\begin{itemize}[leftmargin=2em]
\item Section \ref{sec:history} places the \(K\)-moment problem inside the classical moment tradition.
\item Section \ref{sec:generalK} explains the general formulation of the multidimensional \(K\)-moment problem and the role of Haviland's theorem.
\item Section \ref{sec:rag} develops the real-algebraic language of quadratic modules, preorderings, basic closed semialgebraic sets, and positivity certificates.
\item Section \ref{sec:compactgeom} gives a detailed geometric proof of the compact \(K\)-moment theorem through Schm\"udgen's strict positivity theorem.
\item Section \ref{sec:putinar} presents Putinar's theorem and clarifies the distinction between the preordering \(T(f)\) and the quadratic module \(Q(f)\).
\item Section \ref{sec:operator} gives the complementary operator-theoretic proof via a GNS construction.
\item Section \ref{sec:determinacy} discusses uniqueness and determinacy beyond the compact case.
\item Section \ref{sec:truncated} briefly surveys the truncated \(K\)-moment problem and flat extension phenomena.
\item Section \ref{sec:sos} reviews the relation with sums of squares and positive polynomials.
\item Section \ref{sec:algsets} treats the special case of real algebraic sets.
\end{itemize}

Throughout, all algebras are commutative and unital, all measures are nonnegative Radon measures, and all topological assertions are with respect to the Euclidean topology unless specified otherwise.

\section{Historical orientation}\label{sec:history}

The one-dimensional moment problems are older than their multivariate descendants and help fix intuition. The classical Hamburger problem asks which sequences \((s_n)_{n\ge 0}\) arise as moments
\[
s_n=\int_{\R} t^n\,d\mu(t)
\]
of a positive Borel measure on \(\R\). The Stieltjes problem imposes support in \([0,\infty)\), while the Hausdorff problem imposes support in a compact interval, usually \([0,1]\). In dimension one, positivity of Hankel matrices provides a complete existence criterion in the Hamburger and Stieltjes settings, while in the Hausdorff case complete monotonicity-type conditions capture the support restriction.

The passage to several variables changes the nature of the subject. If \(\alpha=(\alpha_1,\dots,\alpha_d)\in\N_0^d\) is a multi-index and
\[
s_\alpha=\int_{\R^d} x^\alpha\,d\mu(x), \qquad x^\alpha=x_1^{\alpha_1}\cdots x_d^{\alpha_d},
\]
then positivity of the associated multivariate moment matrices is still necessary, but no longer by itself a transparent existence criterion. The right general framework is Haviland's theorem, which replaces matrix positivity by positivity on the full cone of polynomials nonnegative on the support set.

This already reveals a decisive point. The moment problem is, at heart, a positivity problem. To solve it one must understand the cone
\[
\Pos(K):=\{p\in\Rxd: p(x)\ge 0\text{ for all }x\in K\}
\]
well enough to test linear functionals against it. For arbitrary closed sets \(K\), this cone is analytically natural but algebraically unwieldy. For semialgebraic sets, however, real algebraic geometry provides finitely generated algebraic cones that approximate or sometimes equal \(\Pos(K)\).

The decisive postwar development was the emergence of Positivstellens\"atze. The Krivine--Stengle theorem gives a general certificate for positivity on semialgebraic sets, but it typically involves denominators. Schm\"udgen's theorem showed that compactness eliminates denominators at the level of preorderings. Putinar later proved a sharper denominator-free result under an Archimedean hypothesis on the quadratic module itself. These results transformed the semialgebraic \(K\)-moment problem from a qualitative existence theorem into a tractable algebraic theory.

\section{The general multidimensional \(K\)-moment problem}\label{sec:generalK}

\subsection{Moment functionals and representing measures}

\begin{definition}
Let \(K\subseteq\R^d\). A linear functional \(L:\Rxd\to\R\) is called a \emph{$K$-moment functional} if there exists a nonnegative Radon measure \(\mu\) on \(\R^d\), supported on \(K\), such that
\[
L(p)=\int_K p\,d\mu \qquad \text{for all } p\in\Rxd.
\]
Such a measure \(\mu\) is called a \emph{representing measure} for \(L\).
\end{definition}

\begin{remark}
A linear functional may have more than one representing measure when \(K\) is noncompact. The issue of uniqueness of the representing measure is known as \emph{determinacy}. We return to it in Section \ref{sec:determinacy}.
\end{remark}

If \(L\) is represented by \(\mu\), then necessarily
\[
L(p)\ge 0 \qquad \text{for every } p\in \Pos(K).
\]
Haviland's theorem says that, remarkably, this obvious necessary condition is also sufficient.

\subsection{Haviland's theorem}

\begin{theorem}[Riesz--Haviland]\label{thm:rieszhaviland}
Let \(K\subseteq\R^d\) be closed, and let \(L:\Rxd\to\R\) be linear. Then the following are equivalent.
\begin{enumerate}[label=\textup{(\roman*)}, leftmargin=2em]
\item \(L(g)\ge 0\) for every \(g\in\Pos(K)\).
\item There exists a nonnegative Radon measure \(\mu\) on \(\R^d\), supported on \(K\), such that
\[
L(p)=\int_K p\,d\mu \qquad \text{for all } p\in\Rxd.
\]
\end{enumerate}
\end{theorem}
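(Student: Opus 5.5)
The implication (ii)$\Rightarrow$(i) is immediate: if $L(p)=\int_K p\,d\mu$ and $g\in\Pos(K)$, then $g\ge 0$ on $\supp\mu\subseteq K$, so $L(g)\ge 0$. (Finiteness of $\int |p|\,d\mu$ is built into the statement, since $L(p)$ is a real number.) All the work is in (i)$\Rightarrow$(ii). There the plan is to extend $L$ to a positive functional on a space of continuous functions that contains the polynomials and vanishes at infinity modulo a polynomial weight. The Riesz representation theorem then supplies the measure.

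Concretely, let $E$ be the vector space of continuous functions $\varphi$ on $K$ that satisfy $|\varphi|\le p$ on $K$ for some polynomial $p$. Equivalently, $\varphi\in E$ when $|\varphi(x)|\le C(1+|x|^{2})^m$ for some $C,m$. Then $\Rxd|_K\subseteq E$. Note that $L$ is well defined on restrictions: if $p|_K=0$, then $\pm p\in\Pos(K)$, so $L(p)=0$. The space $\Rxd|_K$ is cofinal in $E$ for the pointwise order on $K$. Hence the M.~Riesz extension theorem, in its Hahn--Banach cone form, gives a linear extension $\tilde L:E\to\R$ with $\tilde L(\varphi)\ge 0$ whenever $\varphi\ge 0$ on $K$. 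Define the extension by the sublinear functional $\varphi\mapsto\inf\{L(p): p\ge\varphi \text{ on } K\}$. Condition (i) is exactly what makes this infimum finite and at least $L(\varphi)$ on polynomials.

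Restricting $\tilde L$ to $C_c(K)$ yields a positive linear functional. Since $K$ is closed, it is locally compact. The Riesz representation theorem therefore gives a Radon measure $\mu$ on $K$, which we extend by zero to $\R^d$, with $\tilde L(\varphi)=\int\varphi\,d\mu$ for all $\varphi\in C_c(K)$.

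The main obstacle is passing from $C_c(K)$ back to polynomials, that is, showing $\int p\,d\mu=L(p)$. I would handle this by a tightness argument using the extra growth available in $E$. Fix $p$ and pick $q=(1+|x|^2)^{m}$ with $|p|\le q$, together with $r=(1+|x|^2)^{m+1}$. Choose cutoffs $0\le\chi_n\le 1$ in $C_c$ with $\chi_n=1$ on the ball of radius $n$. For $p\ge0$ on $K$, first monotone convergence and positivity give $\int p\,d\mu=\sup_n\tilde L(\chi_n p)\le L(p)$, so $\int p\,d\mu<\infty$. Next, on $|x|\ge n$ we have $(1-\chi_n)|p|\le q\le r/(1+n^2)$. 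Positivity of $\tilde L$ then gives
\[
|L(p)-\tilde L(\chi_n p)|=|\tilde L((1-\chi_n)p)|\le \frac{L(r)}{1+n^2}\longrightarrow 0 .
\]
For general $p$, decompose $p=p^+-p^-$ with $p^\pm\in E$ and dominate as above. Dominated convergence, with integrable majorant $q$, gives $\int p\,d\mu=\lim_n\tilde L(\chi_n p)=L(p)$. This completes (i)$\Rightarrow$(ii).
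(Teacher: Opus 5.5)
The paper does not prove this theorem. It quotes Haviland's theorem as a classical result and uses it as a black box in Theorem~\ref{thm:compactkmomentgeom} and Corollary~\ref{cor:putinar_moment}. Your argument is correct, and it is the standard textbook proof in three steps:
\begin{enumerate}[label=\textup{(\alph*)}, leftmargin=2em]
\item a positive extension of $L$ to the polynomially dominated continuous functions on $K$, via M.~Riesz, or Hahn--Banach with your sublinear majorant;
\item the Riesz representation theorem on $C_c(K)$;
\item a tail estimate in which a dominating polynomial of strictly higher degree absorbs the mass near infinity.
\end{enumerate}
Schm\"udgen's monograph packages the last two steps through adapted spaces and Choquet's theorem, but the mechanism is the same.

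The natural comparison inside the paper is with the operator-theoretic proof of Section~\ref{sec:operator}. There, Archimedeanity makes the multiplication operators bounded, so the spectral theorem produces a compactly supported measure directly and Haviland is never invoked. Your route is what is needed for an arbitrary closed $K$, where no such boundedness exists and the measure must be extracted from positivity on all of $\Pos(K)$.

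Three details deserve tightening.
\begin{enumerate}[label=\textup{(\arabic*)}, leftmargin=2em]
\item A polynomial satisfies $|p|\le C(1+|x|^2)^m$ only up to a constant. So take $q=C(1+|x|^2)^m$ and $r=C(1+|x|^2)^{m+1}$ with the same $C$, which keeps $q\le r/(1+n^2)$ on $|x|\ge n$.
\item Monotone convergence requires the cutoffs to increase in $n$, e.g.\ $\chi_n(x)=\min\{1,\max\{0,n+1-|x|\}\}$. Alternatively, Fatou's lemma already gives $\int p\,d\mu\le\liminf_n\tilde L(\chi_n p)\le L(p)$ for $p\ge 0$ on $K$.
\item The splitting $p=p^+-p^-$ is unnecessary. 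The bound $|\tilde L((1-\chi_n)p)|\le L(r)/(1+n^2)$ holds for every polynomial $p$, because $r/(1+n^2)\pm(1-\chi_n)p\ge 0$ on $K$. The $\mu$-integrability of $p$ follows by applying your first step to the nonnegative polynomial $q$, which gives $\int |p|\,d\mu\le\int q\,d\mu\le L(q)<\infty$. Dominated convergence then concludes.
\end{enumerate}
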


\begin{remark}
This theorem reduces the existence of representing measures to a positivity question on \(\Pos(K)\). It is hard to overstate its conceptual importance. Every later result in the semialgebraic theory may be read as an attempt to replace positivity on \(\Pos(K)\) by positivity on a more explicit algebraic cone.
\end{remark}

\begin{example}
For \(K=\R^d\), Haviland's theorem says that \(L\) is a moment functional if and only if \(L(p)\ge 0\) for every polynomial globally nonnegative on \(\R^d\). This is elegant, but not directly practical, because the cone of globally nonnegative polynomials is difficult to describe explicitly in dimensions \(d\ge 2\).
\end{example}

\subsection{Why semialgebraic sets are special}

Suppose now that
\[
K=K(f):=\{x\in\R^d: f_i(x)\ge 0\text{ for }i=1,\dots,s\}
\]
for finitely many polynomials \(f_1,\dots,f_s\). Instead of testing \(L\) on the entire cone \(\Pos(K)\), one seeks a finitely generated algebraic cone \(C(f)\subseteq\Pos(K)\) such that positivity of \(L\) on \(C(f)\) forces positivity on all of \(\Pos(K)\). The compact \(K\)-moment theorem and Putinar's theorem show that in favorable situations one may take \(C(f)=T(f)\) or even \(C(f)=Q(f)\).

\section{Preliminaries from real algebraic geometry}\label{sec:rag}

\subsection{Quadratic modules and preorderings}

The natural algebraic objects attached to positivity questions are subsets that are closed under addition and multiplication by squares.

\begin{definition}
Let \(\Acal\) be a unital commutative \(\R\)-algebra. A \emph{quadratic module} in \(\Acal\) is a subset \(Q\subseteq\Acal\) such that
\begin{enumerate}[label=\textup{(\roman*)}, leftmargin=2em]
\item \(Q+Q\subseteq Q\),
\item \(1\in Q\),
\item \(a^2Q\subseteq Q\) for every \(a\in\Acal\).
\end{enumerate}
If in addition \(Q\cdot Q\subseteq Q\), then \(Q\) is called a \emph{preordering}.
\end{definition}

\begin{remark}
A quadratic module is an algebraic surrogate for a cone of nonnegative elements. A preordering is stronger because it is also closed under multiplication. For positivity on a set cut out by several inequalities, preorderings arise naturally because products of the defining inequalities should remain nonnegative.
\end{remark}

The smallest quadratic module in \(\Rxd\) is the cone of sums of squares.

\begin{definition}
We write
\[
\sos:=\left\{\sigma_1^2+\cdots+\sigma_m^2: m\ge 1,\ \sigma_i\in\Rxd\right\}.
\]
\end{definition}

\begin{proposition}
The cone \(\sos\) is the smallest quadratic module in \(\Rxd\). It is also the smallest preordering in \(\Rxd\).
\end{proposition}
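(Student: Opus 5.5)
The proof has two halves: show that \(\sos\) is itself a preordering (hence a quadratic module), and show that it is contained in every quadratic module. Together these give both minimality claims. Since every preordering is in particular a quadratic module, \(\sos\) sits inside every preordering as well. So \(\sos\) is the smallest element of both families, provided it belongs to both.

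First I verify the axioms directly from the definition of \(\sos\) as finite sums \(\sigma_1^2+\cdots+\sigma_m^2\).
\begin{itemize}[leftmargin=2em]
\item Closure under addition holds because concatenating two lists of squares gives another finite list.
\item We have \(1=1^2\in\sos\).
\item For any \(a\in\Rxd\), \(a^2\sum_i\sigma_i^2=\sum_i (a\sigma_i)^2\).
\item For the preordering axiom, I expand the product of two sums of squares:
\[
\Bigl(\sum_i \sigma_i^2\Bigr)\Bigl(\sum_j \tau_j^2\Bigr)=\sum_{i,j}(\sigma_i\tau_j)^2,
\]
which is again a sum of squares, using commutativity of \(\Rxd\).
\end{itemize}
Hence \(\sos\) is a preordering, and in particular a quadratic module.

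Next let \(Q\) be an arbitrary quadratic module in \(\Rxd\). Since \(1\in Q\) and \(a^2Q\subseteq Q\), we get \(\sigma^2=\sigma^2\cdot 1\in Q\) for every \(\sigma\in\Rxd\). Closure under addition, applied inductively on the number of terms \(m\), then gives \(\sigma_1^2+\cdots+\sigma_m^2\in Q\). Thus \(\sos\subseteq Q\). Applying this to preorderings yields the second claim.

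There is no genuine obstacle here. The only point needing care is the bookkeeping in the product identity, namely that it relies on commutativity, and the observation that minimality among preorderings follows at once because every preordering is a quadratic module.
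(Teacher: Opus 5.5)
Your proof is correct and follows the paper's argument essentially verbatim: you check that \(\sos\) satisfies the preordering axioms, including multiplicative closure via the expansion of a product of sums of squares. You then use \(1\in Q\), closure under multiplication by squares, and closure under addition to get \(\sos\subseteq Q\) for every quadratic module \(Q\), and your remark that minimality among preorderings follows because every preordering is a quadratic module spells out a step the paper leaves implicit.
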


\begin{proof}
It is immediate from the definition that \(\sos\) is closed under addition, contains \(1\), and is closed under multiplication by squares. It is also multiplicatively closed because a product of sums of squares is again a sum of squares. If \(Q\) is any quadratic module in \(\Rxd\), then \(1\in Q\) and hence \(\sigma^2=\sigma^2\cdot 1\in Q\) for every \(\sigma\in\Rxd\). Since \(Q\) is closed under addition, every finite sum of squares belongs to \(Q\), so \(\sos\subseteq Q\).
\end{proof}

\subsection{Basic closed semialgebraic sets}

\begin{definition}
Let \(f=(f_1,\dots,f_s)\) be a finite family in \(\Rxd\). The associated \emph{basic closed semialgebraic set} is
\[
K(f):=\{x\in\R^d: f_i(x)\ge 0\text{ for }i=1,\dots,s\}.
\]
\end{definition}

\begin{definition}
For such a family \(f=(f_1,\dots,f_s)\), the \emph{quadratic module generated by \(f\)} is
\[
Q(f):=\left\{\sigma_0+\sigma_1f_1+\cdots+\sigma_sf_s: \sigma_0,\dots,\sigma_s\in\sos\right\},
\]
and the \emph{preordering generated by \(f\)} is
\[
T(f):=\left\{\sum_{e\in\{0,1\}^s}\sigma_ef_1^{e_1}\cdots f_s^{e_s}:\sigma_e\in\sos\right\}.
\]
\end{definition}

Every element of \(Q(f)\) and every element of \(T(f)\) is nonnegative on \(K(f)\), so
\[
Q(f)\subseteq T(f)\subseteq \Pos(K(f)).
\]
The whole point of the theory is to understand how large the gaps between these three cones can be.

\subsection{Positivity certificates and the Positivstellensatz}

The most general answer is the Krivine--Stengle Positivstellensatz.

\begin{theorem}[Krivine--Stengle]\label{thm:ks}
Let \(f=(f_1,\dots,f_s)\subset \Rxd\), and let \(g\in\Rxd\). Then the following statements hold.
\begin{enumerate}[label=\textup{(\arabic*)}, leftmargin=2em]
\item \(g>0\) on \(K(f)\) if and only if there exist \(p,q\in T(f)\) such that
\[
pg=1+q.
\]
\item \(g\ge 0\) on \(K(f)\) if and only if there exist \(p,q\in T(f)\) and \(m\in\N\) such that
\[
pg=g^{2m}+q.
\]
\item \(g=0\) on \(K(f)\) if and only if there exists \(m\in\N\) such that
\[
-g^{2m}\in T(f).
\]
\item \(K(f)=\varnothing\) if and only if \(-1\in T(f)\).
\end{enumerate}
\end{theorem}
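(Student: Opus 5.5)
The plan is to reduce all four statements to item (4), the "abstract" Positivstellensatz, and to prove (4) with the Artin--Lang machinery: orderings of the function field (or of residue fields of prime ideals) together with Tarski transfer. Within each item the "if" directions are elementary evaluations at points of \(K(f)\), since elements of \(T(f)\) are nonnegative there.

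For (1), if \(pg=1+q\) and \(x\in K(f)\), then \(p(x)g(x)=1+q(x)\ge 1\), which forces \(g(x)\ne 0\). Since \(p(x)\ge0\), we get \(g(x)>0\).
For (2), if \(x\in K(f)\) with \(g(x)<0\), then \(p(x)g(x)\le 0\) while \(g(x)^{2m}+q(x)>0\), a contradiction.
For (3), \(-g^{2m}\ge0\) on \(K(f)\) forces \(g=0\) there.
For (4), \(-1\ge0\) is impossible at any point.

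The converses are all deduced from (4) by adjoining inequalities and, where needed, an auxiliary variable.
\begin{itemize}
\item For (1), the set \(K(f_1,\dots,f_s,-g)\) is empty. By (4), \(-1\in T(f,-g)\), i.e. \(-1=a-bg\) with \(a,b\in T(f)\). This rearranges to \(bg=1+a\).
\item For (3), apply (4) in \(\R[x,t]\) to \(K(f,\,\cdot)\) together with the condition \(gt-1=0\), encoded by the generators \(gt-1\) and \(1-gt\). This set is empty, so \(-1\) lies in the preordering generated. Setting \(t=1/g\) and clearing denominators by a large even power \(g^{2m}\) (Rabinowitsch's trick) gives \(-g^{2m}\in T(f)\). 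A cleaner route uses only inequalities: \(K(f,g)\cap\{g\neq 0\}\), handled via \(K(f,-g^2\cdot h)\)-type sets.
\item In practice I would prove the unified statement directly: if \(K(f)\cap\{g\ne0\}\cap\{h_j\ge0\}=\varnothing\), then \(-g^{2m}\in T(f,h)\). Then (2) follows from the emptiness of \(K(f,-g)\cap\{g\ne0\}\). This gives \(-g^{2m}=a-bg\), i.e. \(bg=g^{2m}+a\), and (3) is the same identity with both \(g\) and \(-g\) adjoined.
\end{itemize}

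The core is therefore the following claim: if \(T=T(f)\) is a proper preordering with \(-1\notin T\), then \(K(f)\ne\varnothing\). I would argue by contraposition.
\begin{itemize}
\item Assume \(-1\notin T\). By Zorn's lemma, extend \(T\) to a maximal proper preordering \(P\).
\item Standard arguments show \(P\cup -P=\Rxd\), and that \(\mathfrak p:=P\cap -P\) is a prime ideal. The key step here is that if \(a\notin P\), then \(P-aP\) is still proper, because \(-1=p_1-ap_2\) would contradict maximality applied to \(P+aP\).
\item Hence \(P\) induces an ordering \(\le\) on the field \(F=\mathrm{Frac}(\Rxd/\mathfrak p)\) in which each \(f_i\) is nonnegative. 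Let \(R\) be the real closure of \((F,\le)\).
\item The sentence \(\exists y\in R^d:\ f_1(y)\ge0,\dots,f_s(y)\ge0\) holds in \(R\), witnessed by \(y=(\bar x_1,\dots,\bar x_d)\).
\item By Tarski--Seidenberg transfer (model completeness of real closed fields), the same sentence holds in \(\R\), so \(K(f)\ne\varnothing\).
\end{itemize}

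The main obstacle is this last step, and specifically the fact that \(P\) is total with prime support. Verifying \(P\cup-P=\Rxd\) requires the identity trick \((p_1-ap_2)(p_3+bp_4)\)-style combination showing that if both \(-1\in P+aT'\) and \(-1\in P-aT'\) then \(-a^2\cdot\) something forces \(-1\in P\). Concretely: from \(-1=s_1+as_2\) and \(-1=s_3-as_4\) one gets \(-a^2s_2s_4=(1+s_1)(1+s_3)\), hence \(-1\in P\).

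Primality of the support uses a similar manipulation. Transfer itself I would quote as a black box.
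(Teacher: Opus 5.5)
The paper does not prove this theorem. It is quoted as classical background and then used as a black box: part (1) in Proposition \ref{prop:compactimpliesarch}, and part (2) in the derivation of Artin's theorem. So there is no argument of the paper's to compare with.

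Your proposal supplies the missing proof along the standard route, and it is correct. You prove (4) via a maximal proper preordering \(P\supseteq T(f)\), which is total with prime support \(\mathfrak p\) and so orders \(\mathrm{Frac}(\Rxd/\mathfrak p)\); real closure and Tarski transfer finish it. Then (1) follows from \(T(f,-g)=T(f)-g\,T(f)\), and (2), (3) follow by Rabinowitsch's trick in \(\R[x,t]\).

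The details check out. After substituting \(t=1/g\), every summand carrying a factor \(gt-1\) or \(1-gt\) vanishes. Multiplying by \(g^{2m}\), with \(m\) at least the largest \(t\)-degree among the polynomials being squared, leaves an element of \(T(f,h)\). The case \(g\equiv0\) is trivial, and \(m\ge1\) can always be arranged by multiplying by \(g^2\).

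One step deserves an explicit sentence: why transfer applies. Since \(\mathfrak p\) is proper, \(\mathfrak p\cap\R=0\). The induced ordering is the usual one on \(\R\), because nonnegative reals are squares. Hence \(\R\subseteq R\) is an extension of real closed fields, and your existential sentence with real coefficients descends to \(\R\).

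Two remarks in your sketch should be cleaned up.
\begin{enumerate}
\item The ``cleaner route using only inequalities'' through \(K(f,-g^2h)\)-type sets is not an argument as written. You do not need it, so drop it.
\item The phrase ``(3) is the same identity with both \(g\) and \(-g\) adjoined'' is muddled. Either apply your unified statement with no extra \(h_j\), or apply (2) to \(g\) and to \(-g\) and multiply: \(-p_1p_2g^2=(g^{2m_1}+q_1)(g^{2m_2}+q_2)\), which gives \(-g^{2(m_1+m_2)}\in T(f)\).
\end{enumerate}

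Your route uses no compactness or Archimedean input, so it can underpin Proposition \ref{prop:compactimpliesarch} without circularity. The deep ingredient is Tarski transfer (equivalently the Artin--Lang homomorphism theorem), which Artin's theorem needs in any case. So the paper's derivation of Artin's theorem from (2) is consistent, but it is not a shortcut.
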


\begin{remark}
The theorem gives a complete algebraic characterization of positivity on semialgebraic sets, but denominators are hidden in the factor \(p\). Compactness or Archimedeanity sometimes allows one to avoid them entirely. This denominator-free phenomenon is what makes the compact \(K\)-moment problem so clean.
\end{remark}

\subsection{Archimedean quadratic modules}

Compactness of a semialgebraic set is reflected algebraically by the Archimedean property.

\begin{definition}
Let \(Q\) be a quadratic module in a unital commutative \(\R\)-algebra \(\Acal\). The set of \emph{\(Q\)-bounded elements} is
\[
\Acal_b(Q):=\{a\in\Acal: \exists \lambda>0\text{ such that }\lambda\pm a\in Q\}.
\]
We say that \(Q\) is \emph{Archimedean} if \(\Acal_b(Q)=\Acal\).
\end{definition}

\begin{remark}
The order defined by \(Q\) is \(a\preceq_Q b\) if \(b-a\in Q\). Then \(Q\) is Archimedean exactly when every element of the algebra is order-bounded by a scalar. This is the algebraic analogue of compactness.
\end{remark}

\begin{lemma}\label{lem:boundedcriterion}
Let \(Q\) be a quadratic module in \(\Acal\) and let \(a\in\Acal\). Then the following are equivalent.
\begin{enumerate}[label=\textup{(\roman*)}, leftmargin=2em]
\item \(a\in \Acal_b(Q)\).
\item There exists \(\lambda>0\) such that \(\lambda^2-a^2\in Q\).
\end{enumerate}
\end{lemma}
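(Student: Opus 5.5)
Both directions come down to exhibiting explicit algebraic identities whose terms are squares, or squares times elements of \(Q\). We then use the three axioms: closure under addition, \(1\in Q\), and closure under multiplication by squares.

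For (i)\(\Rightarrow\)(ii), suppose \(\lambda\pm a\in Q\) with \(\lambda>0\). We use the identity
\[
\lambda^2-a^2=\frac{1}{2\lambda}\Bigl((\lambda+a)^2(\lambda-a)+(\lambda-a)^2(\lambda+a)\Bigr).
\]
It holds because the bracket equals \((\lambda+a)(\lambda-a)\bigl((\lambda+a)+(\lambda-a)\bigr)=2\lambda(\lambda^2-a^2)\). Each summand in the bracket is a square times an element of \(Q\), hence lies in \(Q\), and so their sum lies in \(Q\). Multiplication by the positive scalar \(\frac{1}{2\lambda}\) preserves \(Q\) because \(\frac{1}{2\lambda}=\bigl(\frac{1}{\sqrt{2\lambda}}\bigr)^2\) is a square of a real scalar in the unital algebra \(\Acal\). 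Hence \(\lambda^2-a^2\in Q\), which gives (ii) with the constant \(\lambda^2\) written as the square of \(\lambda\).

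For (ii)\(\Rightarrow\)(i), suppose \(\lambda^2-a^2\in Q\) with \(\lambda>0\). We use the completing-the-square identity
\[
\lambda\pm a=\frac{1}{2\lambda}\Bigl((\lambda^2-a^2)+(\lambda\pm a)^2\Bigr),
\]
which holds because the bracket is \(\lambda^2-a^2+\lambda^2\pm2\lambda a+a^2=2\lambda(\lambda\pm a)\). The first term in the bracket is in \(Q\) by hypothesis. The second term is a square, hence in \(Q\), since \(1\in Q\) and \(Q\) is closed under multiplication by squares. Their sum is therefore in \(Q\), and scaling by \(\frac{1}{2\lambda}\) keeps it in \(Q\) as before. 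This shows \(\lambda\pm a\in Q\), so \(a\in\Acal_b(Q)\).

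There is no real obstacle here. The only point needing care is the scaling step: positive reals act on \(Q\) because they are squares of scalars in \(\Acal\). The other thing to get right is finding the identity in the forward direction, since that direction requires multiplying two elements of \(Q\), which a mere quadratic module does not allow. The trick is to write the product \((\lambda+a)(\lambda-a)\) as a combination of terms of the form square times element of \(Q\), as done above.
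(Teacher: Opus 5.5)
Your proof is correct and is the paper's argument: it uses the same two identities, $\lambda^2-a^2=\frac{1}{2\lambda}\bigl((\lambda+a)^2(\lambda-a)+(\lambda-a)^2(\lambda+a)\bigr)$ and $\lambda\pm a=\frac{1}{2\lambda}\bigl((\lambda^2-a^2)+(\lambda\pm a)^2\bigr)$. You also justify explicitly that scaling by $\frac{1}{2\lambda}$ preserves $Q$, because it is the square of a real scalar; the paper leaves this step implicit.
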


\begin{proof}
If \(\lambda\pm a\in Q\), then
\[
\lambda^2-a^2=\frac{1}{2\lambda}\bigl((\lambda+a)^2(\lambda-a)+(\lambda-a)^2(\lambda+a)\bigr)\in Q.
\]
Conversely, if \(\lambda^2-a^2\in Q\), then
\[
\lambda\pm a=\frac{1}{2\lambda}\bigl((\lambda^2-a^2)+(\lambda\pm a)^2\bigr)\in Q.
\]
\end{proof}

\begin{lemma}\label{lem:subalg}
Let \(Q\) be a quadratic module in \(\Acal\). Then \(\Acal_b(Q)\) is a unital subalgebra of \(\Acal\). In particular, if \(\Acal\) is generated as an algebra by \(a_1,\dots,a_n\), then \(Q\) is Archimedean if and only if each \(a_i\) belongs to \(\Acal_b(Q)\).
\end{lemma}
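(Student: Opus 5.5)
We must show that \(\Acal_b(Q)\) contains the scalars and is closed under addition, scalar multiplication and multiplication. The second sentence then follows at once: a subalgebra containing the generators \(a_1,\dots,a_n\) is all of \(\Acal\), and the converse implication is trivial.

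\emph{Scalars and scalar multiples.} For \(c\in\R\) take \(\lambda=|c|+1\). Then \(\lambda\pm c\) is a positive real number, hence a square times \(1\), so it lies in \(Q\). If \(\lambda\pm a\in Q\) and \(c\in\R\), set \(\mu=|c|\lambda\). For \(c>0\) we have \(\mu\pm ca=c(\lambda\pm a)\in Q\), because \(Q\) is closed under multiplication by \(c=(\sqrt c)^2\). The case \(c<0\) swaps the two signs, and \(c=0\) is covered by the scalar case.

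\emph{Sums.} If \(\lambda\pm a\in Q\) and \(\mu\pm b\in Q\), then \((\lambda+\mu)\pm(a+b)\in Q\) by additivity.

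\emph{Products.} This is the only real step. A quadratic module need not be closed under products, so we cannot simply multiply \(\lambda\pm a\) by \(\mu\pm b\). Instead we use the square-based criterion of Lemma~\ref{lem:boundedcriterion} and the polarization identity
\[
ab=\tfrac14\bigl((a+b)^2-(a-b)^2\bigr).
\]
Since \(\Acal_b(Q)\) is already known to be a vector space, it suffices to show that \(c\in\Acal_b(Q)\) implies \(c^2\in\Acal_b(Q)\).

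Suppose \(\lambda\pm c\in Q\). By Lemma~\ref{lem:boundedcriterion}, \(\lambda^2-c^2\in Q\). Also \(\lambda^2+c^2\in Q\), because it is a scalar square plus \(c^2\cdot 1\). Thus \(\lambda^2\pm c^2\in Q\), so \(c^2\) is bounded.

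Now apply this to \(c=a\pm b\), which is bounded by the sum step. Then \((a+b)^2\) and \((a-b)^2\) are bounded, and by polarization together with the linear closure so is \(ab\).

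The main obstacle is exactly this lack of multiplicative closure of \(Q\). It is overcome by reducing products to squares, where Lemma~\ref{lem:boundedcriterion} supplies the bound \(\lambda^2-c^2\in Q\) directly.
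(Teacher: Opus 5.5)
Your proof is correct. It differs from the paper's only in the product step.

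The paper multiplies the two square bounds directly. From \(\lambda^2-a^2,\ \mu^2-b^2\in Q\) it gets
\[
(\lambda\mu)^2-(ab)^2=\mu^2(\lambda^2-a^2)+a^2(\mu^2-b^2)\in Q,
\]
and then applies the direction \textup{(ii)}\(\Rightarrow\)\textup{(i)} of Lemma~\ref{lem:boundedcriterion} to conclude \(ab\in\Acal_b(Q)\).

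You instead reduce products to squares. A bounded \(c\) gives \(\lambda^2\pm c^2\in Q\), so \(c^2\) is bounded straight from the definition, and polarization together with linear closure then handles \(ab\). Your route needs only the direction \textup{(i)}\(\Rightarrow\)\textup{(ii)} of the criterion, and it lets the vector-space structure do part of the work. You also verify that structure explicitly, including that scalars are bounded, which the paper simply calls immediate.

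The paper's identity is a one-liner and gives the natural multiplicative bound \(\lambda\mu\) for \(ab\). Yours yields the cruder bound \((\lambda+\mu)^2/2\). For the qualitative statement either bound suffices.
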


\begin{proof}
Closure under addition and scalar multiplication is immediate. For multiplication, let \(a,b\in\Acal_b(Q)\). By Lemma \ref{lem:boundedcriterion}, choose \(\lambda,\mu>0\) such that \(\lambda^2-a^2\in Q\) and \(\mu^2-b^2\in Q\). Then
\[
(\lambda\mu)^2-(ab)^2=\mu^2(\lambda^2-a^2)+a^2(\mu^2-b^2)\in Q.
\]
Another application of Lemma \ref{lem:boundedcriterion} gives \(ab\in\Acal_b(Q)\).
\end{proof}

\begin{corollary}\label{cor:archpoly}
For a quadratic module \(Q\subseteq\Rxd\), the following are equivalent.
\begin{enumerate}[label=\textup{(\roman*)}, leftmargin=2em]
\item \(Q\) is Archimedean.
\item There exists \(\lambda>0\) such that
\[
\lambda-\sum_{j=1}^d x_j^2\in Q.
\]
\item For each \(j=1,\dots,d\), there exists \(\lambda_j>0\) such that
\[
\lambda_j-x_j^2\in Q.
\]
\end{enumerate}
\end{corollary}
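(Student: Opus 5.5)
I will prove the cycle of implications (i)\(\Rightarrow\)(ii)\(\Rightarrow\)(iii)\(\Rightarrow\)(i).

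\emph{(i)\(\Rightarrow\)(ii).} If \(Q\) is Archimedean, then every element of \(\Rxd\) is \(Q\)-bounded. In particular, the polynomial \(\sum_j x_j^2\) is \(Q\)-bounded, so there is \(\lambda>0\) with \(\lambda-\sum_j x_j^2\in Q\). This is directly the definition of \(\Acal_b(Q)\), applied with the minus sign.

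\emph{(ii)\(\Rightarrow\)(iii).} Fix \(j\) and write
\[
\lambda-x_j^2=\Bigl(\lambda-\sum_{k=1}^d x_k^2\Bigr)+\sum_{k\neq j}x_k^2 .
\]
The first summand lies in \(Q\) by hypothesis. The second is a sum of squares, hence lies in \(Q\) by the Proposition that \(\sos\) is the smallest quadratic module. Since \(Q+Q\subseteq Q\), this gives \(\lambda-x_j^2\in Q\), so we may take \(\lambda_j=\lambda\) for every \(j\).

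\emph{(iii)\(\Rightarrow\)(i).} This is the substantive direction, though it is short once the earlier lemmas are used. Write \(\lambda_j=\mu_j^2\) with \(\mu_j=\sqrt{\lambda_j}>0\). Then \(\mu_j^2-x_j^2\in Q\), and Lemma \ref{lem:boundedcriterion} gives \(x_j\in\Acal_b(Q)\) for each \(j\). The coordinate functions \(x_1,\dots,x_d\) generate \(\Rxd\) as a unital \(\R\)-algebra. Lemma \ref{lem:subalg} says \(\Acal_b(Q)\) is a unital subalgebra, so it contains all of \(\Rxd\), i.e.\ \(Q\) is Archimedean.

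The only point needing care is the reparametrization \(\lambda_j=\mu_j^2\). Lemma \ref{lem:boundedcriterion} is phrased with \(\lambda^2-a^2\), so one must note that any positive constant is the square of a positive real. I expect no real obstacle: all the work sits in Lemma \ref{lem:subalg}, namely its closure under products.
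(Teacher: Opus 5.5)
Your proof is correct and follows the paper's argument: the same cycle of implications, the same identity for (ii)\(\Rightarrow\)(iii), and the same use of Lemma \ref{lem:boundedcriterion} and Lemma \ref{lem:subalg} for (iii)\(\Rightarrow\)(i). The only addition is that you spell out the rewriting \(\lambda_j=\mu_j^2\), which the paper leaves implicit.
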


\begin{proof}
The implication \textup{(i)}\(\Rightarrow\)\textup{(ii)} follows from the definition. The implication \textup{(ii)}\(\Rightarrow\)\textup{(iii)} holds because
\[
\lambda-x_j^2=\left(\lambda-\sum_{k=1}^d x_k^2\right)+\sum_{k\ne j}x_k^2\in Q.
\]
Finally, \textup{(iii)} implies that each coordinate function \(x_j\) is \(Q\)-bounded by Lemma \ref{lem:boundedcriterion}; since \(\Rxd\) is generated by \(x_1,\dots,x_d\), Lemma \ref{lem:subalg} yields \textup{(i)}.
\end{proof}

\begin{corollary}\label{cor:archcompact}
If \(Q(f)\) is Archimedean, then \(K(f)\) is compact.
\end{corollary}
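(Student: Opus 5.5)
We prove Corollary \ref{cor:archcompact} by evaluating a suitable element of \(Q(f)\) pointwise on \(K(f)\). Since \(K(f)\) is defined by the non-strict inequalities \(f_i\ge 0\) with each \(f_i\) continuous, it is closed in \(\R^d\). By the Heine--Borel theorem, it therefore suffices to show that \(K(f)\) is bounded.

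\textbf{Step 1: an algebraic bound.} Assume \(Q(f)\) is Archimedean. By Corollary \ref{cor:archpoly}, \textup{(i)}\(\Rightarrow\)\textup{(ii)}, there is \(\lambda>0\) with
\[
\lambda-\sum_{j=1}^d x_j^2\in Q(f).
\]

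\textbf{Step 2: evaluation on \(K(f)\).} Write this element as
\[
\sigma_0+\sigma_1f_1+\cdots+\sigma_sf_s,
\]
with each \(\sigma_i\in\sos\). Now fix a point \(x\in K(f)\). Each \(\sigma_i(x)\) is a finite sum of squares of real numbers, so \(\sigma_i(x)\ge 0\). Each \(f_i(x)\ge 0\) by the definition of \(K(f)\). Hence the whole expression is nonnegative at \(x\), which gives
\[
\sum_j x_j^2\le\lambda.
\]
This is the inclusion \(Q(f)\subseteq\Pos(K(f))\) already noted in the paper.

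\textbf{Conclusion.} Step 2 shows that \(K(f)\) lies in the closed ball of radius \(\sqrt{\lambda}\) about the origin, so \(K(f)\) is bounded. Being also closed, it is compact.

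There is no real obstacle in this argument. The only substantive input is Corollary \ref{cor:archpoly}, which turns the abstract Archimedean condition into the explicit certificate \(\lambda-\sum_j x_j^2\in Q(f)\).
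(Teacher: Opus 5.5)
Your proof is correct and matches the paper's argument: Corollary~\ref{cor:archpoly} gives \(\lambda-\sum_j x_j^2\in Q(f)\), and nonnegativity of \(Q(f)\) on \(K(f)\) places \(K(f)\) in a closed ball. You also state explicitly that \(K(f)\) is closed, which the paper uses without comment when it concludes compactness.
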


\begin{proof}
By Corollary \ref{cor:archpoly}, there exists \(\lambda>0\) such that
\[
\lambda-\sum_{j=1}^d x_j^2\in Q(f).
\]
Every element of \(Q(f)\) is nonnegative on \(K(f)\), hence
\[
\sum_{j=1}^d x_j^2\le \lambda \qquad \text{for all }x\in K(f).
\]
Thus \(K(f)\) is contained in a closed ball and is therefore compact.
\end{proof}

\begin{remark}
The converse is false for quadratic modules in general, but true for preorderings generated by finitely many defining inequalities, as we shall see below. This asymmetry is one reason Putinar's theorem is more delicate than Schm\"udgen's.
\end{remark}

\section{The geometric proof of the compact \(K\)-moment theorem}\label{sec:compactgeom}

In the compact case the semialgebraic geometry of \(K(f)\) is strong enough to convert positivity on \(T(f)\) into positivity on all of \(\Pos(K(f))\).

\subsection{Compactness implies Archimedeanity for preorderings}

\begin{proposition}\label{prop:compactimpliesarch}
If \(K(f)\) is compact, then \(T(f)\) is Archimedean.
\end{proposition}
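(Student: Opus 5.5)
By Corollary \ref{cor:archpoly}, it suffices to find \(\lambda>0\) such that \(\lambda-\sum_j x_j^2\in T(f)\). Since \(K(f)\) is compact, choose \(R>0\) with \(K(f)\subseteq\{|x|^2<R\}\) and put \(g:=R-\sum_j x_j^2\). Then \(g>0\) on \(K(f)\). The natural tool is the Krivine--Stengle Positivstellensatz (Theorem \ref{thm:ks}), which yields \(p,q\in T(f)\) with
\[
pg=1+q .
\]
The task is then to remove the denominator \(p\) using only the structure of the specific polynomial \(g\). This is W\"ormann's trick.

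\textbf{Step 1.} Multiply \(pg=1+q\) by \(p\) to get \(p^2g=p(1+q)\in T(f)\). Here we use that \(T(f)\) is a preordering: \(p,q\in T(f)\) gives \(p+pq\in T(f)\).

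\textbf{Step 2.} Bound \(p\) from above modulo \(T(f)\) by a multiple of \((1+\sum_j x_j^2)^N\). Every polynomial \(h\) of degree \(\le 2N\) satisfies
\[
c\,\Bigl(1+\sum_j x_j^2\Bigr)^N - h\in\sos
\]
for \(c\) large, by the elementary estimate \(\pm x^\alpha \le \tfrac12(x^{2\beta}+x^{2\gamma})\) when \(\alpha=\beta+\gamma\). Apply this to \(h=p\). Writing \(u:=1+\sum_j x_j^2\), this gives \(c\,u^N - p\in T(f)\).

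\textbf{Step 3.} Combine Steps 1 and 2 to show that \(u\) is \(T(f)\)-bounded. Let \(B:=\Acal_b(T(f))\); by Lemma \ref{lem:subalg} it is a subalgebra.

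First, \(1+q\) is a sum of \(1\) and an element of \(T(f)\). From \(pg=1+q\) together with \(p\in T(f)\), \(q\in T(f)\), the identity
\[
p\,(R+1-u)\in 1+T(f)
\]
yields \(p\,u\preceq (R+1)p\) modulo \(T(f)\), because \(pg\in T(f)\).

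Iterating, with \(pu^k g=u^k(1+q)\in T(f)\), gives \(pu^{k+1}\preceq (R+1)pu^k\preceq\cdots\preceq (R+1)^{k+1}p\).

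Next, \(pg\succeq 1\) implies \(p\succeq 1/R\)-type lower bounds. Combining this with the inequality \(u\,pg \succeq u\) (multiplying by \(u\), which is a sum of squares) gives
\[
u\preceq u\,p\,g\preceq R\,p\,u\preceq R(R+1)\,p .
\]
Step 2 bounds \(p\) by \(c\,u^N\), which is the wrong direction. Instead, bound \(p\) by a constant via \(p\preceq p\,g\cdot(\ldots)\). The cleaner standard route is to apply the argument to \(g_k:=R-\sum_j x_j^{2}\) and to aim directly at \(u\in B\) through \(u\preceq R(R+1)p\) plus \(p\in B\).

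To get \(p\in B\), use Step 2 in the form \(p\preceq c\,u^N\) and \(pu^N\preceq (R+1)^N p\). From \(p^2 g\in T(f)\) and Lemma \ref{lem:boundedcriterion}-type manipulations one derives \(p^2\preceq\) const, hence \(p\in B\).

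\textbf{Main obstacle.} This last denominator-removal step is the crux, namely showing \(p\) (equivalently \(u\)) is bounded from the identity \(pg=1+q\). If the direct manipulation resists, I would fall back on the standard W\"ormann argument. Apply the Positivstellensatz to \(g=(R-\sum x_j^2)\) to obtain \(p(R-\sum x_j^2)=1+q\). Then show \((1+\sum x_j^2)^{k}\) is bounded for the minimal exponent via descending induction on the degree of \(p\). This works because
\[
\lambda - \Bigl(\sum x_j^2\Bigr)^{m}\,p \in T(f)
\]
propagates from degree \(m+1\) to \(m\) using \(pg=1+q\).

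Concluding \(u\in B\) gives \(\lambda-\sum x_j^2\in T(f)\), and Corollary \ref{cor:archpoly} finishes the proof.
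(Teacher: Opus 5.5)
The inequalities you actually prove are correct: $pu^{k}\preceq (R+1)^k p$, $u\preceq R(R+1)p$, and $cu^N-p\in\sos$. Your strategy is also the same W\"ormann-type argument the paper uses. The paper takes $g=\prod_j(1+x_j^2)$ and applies Theorem \ref{thm:ks} to $\lambda^2-g^2$, but the mechanism is identical.

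The gap is exactly the step you call the main obstacle: you never combine these inequalities into a boundedness statement. The claim that $p^2g\in T(f)$ plus manipulations gives $p^2\preceq\text{const}$ is unsupported. The membership $p^2g\in T(f)$ plays no role, and a bound $p^2\preceq\text{const}$ already presupposes that $p$ is bounded. Your fallback, a descending induction on $m$ for $\lambda-(\sum_j x_j^2)^m p\in T(f)$, has no base case: nothing gives this membership for large $m$ before Archimedeanity is known. Step~1 and the lower bound $p\succeq 1/R$ are detours. As written, the proposal does not prove the proposition.

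The missing idea is to produce an inequality that is quadratic in the unknown on one side and linear on the other, then complete the square with Lemma \ref{lem:boundedcriterion}. Your own ingredients suffice. Multiply $cu^N-p\in\sos$ by $p\in T(f)$ to get
\[
p^2\preceq c\,pu^N\preceq c(R+1)^N p=:Cp,
\qquad\text{hence}\qquad
\tfrac{C^2}{4}-\bigl(p-\tfrac{C}{2}\bigr)^2=Cp-p^2\in T(f).
\]
So $p$ is $T(f)$-bounded. Then $u\preceq R(R+1)p$ bounds $u=1+\sum_j x_j^2$, which is what Corollary \ref{cor:archpoly} needs.

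Alternatively, follow the paper's route. Extend your $k=1$ computation to $u^n\preceq u^n pg\preceq R u^n p\preceq R(R+1)^n p$ for all $n$. Take $n=2N$ and use $p\preceq cu^N$ to get $u^{2N}\preceq C' u^N$. Completing the square then shows that $u^N$ is bounded, hence so is $u$, since $u^N-u\in\sos$.
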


\begin{proof}
Let
\[
g(x):=(1+x_1^2)\cdots(1+x_d^2).
\]
Since \(K(f)\) is compact, \(g\) is bounded on \(K(f)\). Choose \(\lambda>0\) such that
\[
\lambda^2>g(x)^2 \qquad \text{for all } x\in K(f).
\]
By Theorem \ref{thm:ks}\textup{(1)}, there exist \(p,q\in T(f)\) such that
\[
p(\lambda^2-g^2)=1+q.
\]
Fix \(n\in\N_0\). Multiplying by \(g^{2n}\) and using that \(T(f)\) is a quadratic module, we obtain
\[
g^{2n+2}p=\lambda^2g^{2n}p-g^{2n}(1+q)\preceq \lambda^2g^{2n}p,
\]
where \(a\preceq b\) means \(b-a\in T(f)\). By induction,
\[
g^{2n}p\preceq \lambda^{2n}p.
\]
Since \(g^{2n}(q+pg^2)\in T(f)\), we also have
\[
g^{2n}\preceq g^{2n}(1+q+pg^2)=\lambda^2g^{2n}p\preceq \lambda^{2n+2}p.
\]
Because \(p\) contains only finitely many monomials, there exist \(c>0\) and \(k\in\N\) such that \(p\preceq c g^k\). Taking \(n=k\) yields
\[
g^{2k}\preceq c\lambda^{2k+2}g^k.
\]
Hence
\[
\left(g^k-\tfrac12 c\lambda^{2k+2}\right)^2\preceq \left(\tfrac12 c\lambda^{2k+2}\right)^2,
\]
so Lemma \ref{lem:boundedcriterion} shows that \(g^k\) is \(T(f)\)-bounded. Since \(\pm x_j\preceq g^k\) for each \(j\), every coordinate function is \(T(f)\)-bounded, and Corollary \ref{cor:archpoly} implies that \(T(f)\) is Archimedean.
\end{proof}

\subsection{Positive linear functionals and separation}

The next two results are the analytic engine behind Schm\"udgen's theorem.

\begin{proposition}\label{prop:boundL}
Let \(L:\Rxd\to\R\) be a \(T(f)\)-positive linear functional, meaning that \(L(h)\ge 0\) for every \(h\in T(f)\). Assume \(K(f)\) is compact. Then:
\begin{enumerate}[label=\textup{(\roman*)}, leftmargin=2em]
\item for every \(p\in\Rxd\),
\[
|L(p)|\le L(1)\,\|p\|_{\infty,K(f)},
\]
\item if \(p\ge 0\) on \(K(f)\), then \(L(p)\ge 0\).
\end{enumerate}
\end{proposition}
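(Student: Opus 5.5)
The plan is to prove (i) by a spectral-radius type estimate built from the Krivine--Stengle Positivstellensatz, and then to derive (ii) from (i) by a shift.

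\emph{Cauchy--Schwarz.} Since \(T(f)\) contains all squares, the bilinear form \((a,b)\mapsto L(ab)\) is positive semidefinite. Hence
\[
L(ab)^2\le L(a^2)L(b^2).
\]
If \(L(1)=0\), then \(L(p)^2\le L(1)L(p^2)=0\), so \(L\equiv 0\) and (i) is trivial. Assume therefore \(L(1)>0\). Applying Cauchy--Schwarz repeatedly with \(a=1\) and \(b=p^{2^{j}}\), an easy induction gives
\[
|L(p)|\le L(1)^{1-2^{-k}}\,L\bigl(p^{2^k}\bigr)^{2^{-k}}\qquad (k\ge 1).
\]
So it suffices to show \(\limsup_n L(p^{2n})^{1/2n}\le \lambda\) for every \(\lambda>\|p\|_{\infty,K(f)}\).

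\emph{Key estimate.} This is the main step, and I would copy the mechanism of Proposition \ref{prop:compactimpliesarch} with \(g\) replaced by \(p\). Since \(\lambda^2-p^2>0\) on the compact set \(K(f)\), Theorem \ref{thm:ks}\textup{(1)} gives \(s,q\in T(f)\) with
\[
s(\lambda^2-p^2)=1+q.
\]
Multiplying by the square \(p^{2n}\) and inducting exactly as in that proof yields \(p^{2n}s\preceq\lambda^{2n}s\), and then
\[
p^{2n}\preceq \lambda^{2n+2}s
\]
in the order of \(T(f)\). Since \(L\) is \(T(f)\)-positive, this gives \(L(p^{2n})\le \lambda^{2n+2}L(s)\). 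Taking \(2n\)-th roots and letting \(n\to\infty\) gives \(\limsup_n L(p^{2n})^{1/2n}\le\lambda\), because \(L(s)\ge0\) is a fixed constant.

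\emph{Conclusion of (i).} Insert this bound into the iterated Cauchy--Schwarz inequality along \(n=2^{k-1}\) and let \(k\to\infty\). This gives \(|L(p)|\le L(1)\lambda\), and letting \(\lambda\downarrow\|p\|_{\infty,K(f)}\) proves (i). The main obstacle is the key estimate. The delicate point is that the Positivstellensatz multiplier \(s\) is uncontrolled, but it enters only as a constant factor \(L(s)\) that disappears under \(2n\)-th roots. Compactness is used only to guarantee \(\lambda^2-p^2>0\) on \(K(f)\) for some finite \(\lambda\).

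\emph{Proof of (ii).} If \(p\ge0\) on \(K(f)\), set \(c=\tfrac12\|p\|_{\infty,K(f)}\). Then \(0\le p\le 2c\) on \(K(f)\), so \(\|p-c\|_{\infty,K(f)}\le c\). By (i),
\[
L(p)-cL(1)\ge -cL(1),
\]
that is, \(L(p)\ge0\).
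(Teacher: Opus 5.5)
Your proof is correct, and it takes a genuinely different route from the paper's.

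The paper obtains (i) from the Archimedean property of \(T(f)\) (Proposition~\ref{prop:compactimpliesarch}) via order-boundedness, deferring details to the cited reference. It obtains (ii) by approximating \(\sqrt{p+\eps}\) uniformly on \(K(f)\) by polynomials \(q_n\), then using (i) to pass from \(L(q_n^2)\ge 0\) to \(L(p+\eps)\ge 0\).

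You never use Archimedeanity. You apply Theorem~\ref{thm:ks}\textup{(1)} directly to \(\lambda^2-p^2\) and rerun the induction of Proposition~\ref{prop:compactimpliesarch} to get \(L(p^{2n})\le\lambda^{2n+2}L(s)\). Iterated Cauchy--Schwarz and \(2n\)-th roots then absorb the uncontrolled multiplier \(s\). This spares you the step in that proposition where the multiplier has to be dominated algebraically by \(cg^k\).

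What your route buys:
\begin{itemize}
\item \emph{The sharp constant from a self-contained argument.} You get \(\|p\|_{\infty,K(f)}\) directly. Order-boundedness alone only gives \(|L(p)|\le\lambda L(1)\) for those \(\lambda\) with \(\lambda\pm p\in T(f)\). Identifying the infimum of such \(\lambda\) with the sup-norm on \(K(f)\) is essentially the strict Positivstellensatz that this proposition is later used to prove.
\item \emph{Generality.} Compactness is needed only to make \(p\) bounded on \(K(f)\), so your argument proves (i) and (ii) for every polynomial bounded on \(K(f)\), compact or not.
\item \emph{A simpler (ii).} Your shift \(p-\tfrac12\|p\|_{\infty,K(f)}\) is more elementary than the paper's square-root approximation and needs no Stone--Weierstrass.
\end{itemize}

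What the Archimedean route buys in exchange is portability. It works for any Archimedean quadratic module, e.g.\ for \(Q(f)\)-positive functionals under Putinar's hypothesis. There the Krivine--Stengle certificates, which live in \(T(f)\), are of no direct use.
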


\begin{proof}
Once Proposition \ref{prop:compactimpliesarch} is known, the first assertion follows from order-boundedness in the Archimedean cone, and the second from polynomial approximation of \(\sqrt{p+\eps}\) on the compact set \(K(f)\); see, for example, \cite[Prop.~12.23]{SchmudgenBook}.
\end{proof}

\begin{proposition}\label{prop:separation}
Let \(Q\) be an Archimedean quadratic module in a unital commutative \(\R\)-algebra \(\Acal\), and let \(a_0\in \Acal\setminus Q\). Then there exists a \(Q\)-positive linear functional \(\varphi:\Acal\to\R\) such that
\[
\varphi(1)=1 \qquad\text{and}\qquad \varphi(a_0)\le 0.
\]
\end{proposition}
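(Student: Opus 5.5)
The plan is to use the Eidelheit--Kakutani separation theorem (a Hahn--Banach variant for convex cones) in the finest locally convex topology on \(\Acal\), equivalently in its algebraic (core) form. The key point is that Archimedeanity of \(Q\) makes \(1\) an \emph{algebraic interior point} of \(Q\). Indeed, for every \(a\in\Acal\) there is \(\lambda>0\) with \(\lambda+a\in Q\). Hence for \(0<t\le 1/\lambda\) we get
\[
1+ta=t\bigl(\tfrac1t+a\bigr)=t(\lambda+a)+t\bigl(\tfrac1t-\lambda\bigr)\in Q,
\]
using that \(Q\) is a convex cone: it is closed under addition and under multiplication by nonnegative scalars, since \(c=(\sqrt c)^2\) and \(c\cdot 1\in Q\). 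So \(Q\) is a convex cone with nonempty core, and \(a_0\notin Q\).

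Next I would apply the algebraic separation theorem: a convex set \(C\) with an internal point and a point \(a_0\notin C\) can be separated by a nonzero linear functional \(\varphi\) with \(\varphi(a_0)\le \inf_{C}\varphi\). This holds without any topology, since the Minkowski gauge of \(C-1\) is a sublinear functional on \(\Acal\) and Hahn--Banach applies. Because \(C=Q\) is a cone, \(\inf_Q\varphi\) is either \(0\) or \(-\infty\). The latter is excluded, so \(\varphi\ge 0\) on \(Q\) and \(\varphi(a_0)\le 0\).

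It remains to normalize. I must show \(\varphi(1)>0\). Since \(\varphi\ne0\), pick \(a\) with \(\varphi(a)\ne 0\). Archimedeanity gives \(\lambda\pm a\in Q\), so \(\lambda\varphi(1)\ge |\varphi(a)|>0\). Replacing \(\varphi\) by \(\varphi/\varphi(1)\) yields \(\varphi(1)=1\) and \(\varphi(a_0)\le 0\), and \(\varphi\) remains \(Q\)-positive.

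The main obstacle is the separation step in a possibly infinite-dimensional space with no given topology. I would handle it explicitly. Define the gauge
\[
p(a)=\inf\{t>0:\ a\in t(Q-1)\}\quad\text{or, more simply,}\quad p(a)=\inf\{t\in\R:\ t\cdot 1-a\in Q\}.
\]
The second choice is finite by Archimedeanity, sublinear, and satisfies \(p(a)\le 0\) for \(-a\in Q\). On \(\R a_0\) define \(\varphi_0(ta_0)=-t\,c\) with \(c\) suitably chosen. Alternatively, extend the functional \(\varphi_0(r\cdot 1)=r\) dominated by \(q(a)=\inf\{t:t-a\in Q\}\) from \(\R 1\), first adjoining \(a_0\) in such a way that \(\varphi(a_0)\le0\). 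Since \(a_0\notin Q\), one checks \(q(-a_0)\ge 0\), and setting \(\varphi(a_0)=-q(-a_0)\) is compatible with domination. Verifying this compatibility is the delicate inequality. Then Hahn--Banach extends to all of \(\Acal\) with \(\varphi\le q\), and \(\varphi\le q\) immediately gives \(\varphi(-h)\le q(-h)\le 0\) for \(h\in Q\), that is, \(Q\)-positivity.
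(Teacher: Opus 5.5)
Your proof is correct and follows essentially the same route as the paper: Archimedeanity makes \(1\) an algebraic interior point of the convex cone \(Q\), and an algebraic (Eidelheit--Kakutani type) separation theorem yields a nonzero \(\varphi\ge 0\) on \(Q\) with \(\varphi(a_0)\le 0\), which you then normalize by \(\varphi(1)\). You also prove \(\varphi(1)>0\) via \(\lambda\varphi(1)\ge|\varphi(a)|\), a step the paper only asserts, and your closing sketch with \(q(a)=\inf\{t: t-a\in Q\}\) also works: the ``delicate'' compatibility reduces to \(q(a_0)+q(-a_0)\ge q(0)=0\), and both \(q(0)=0\) and the lower bound in the finiteness of \(q\) require \(-1\notin Q\), which holds because \(a_0\notin Q\) (this uses more than Archimedeanity alone).
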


\begin{proof}
Because \(Q\) is Archimedean, for every \(a\in\Acal\) there exists \(\lambda>0\) such that \(\lambda\pm a\in Q\). If \(0<\delta\le \lambda^{-1}\), then \(1\pm \delta a\in Q\), so \(1\) is an algebraic interior point of \(Q\). A separation theorem for convex cones yields a nonzero linear functional \(\Psi\) such that \(\Psi(Q)\ge 0\) and \(\Psi(a_0)\le 0\). Since \(\Psi\) is nonzero and positive on \(Q\), one has \(\Psi(1)>0\). After normalization, \(\varphi:=\Psi/\Psi(1)\) has the required properties.
\end{proof}

\subsection{Schm\"udgen's strict positivity theorem}

\begin{theorem}[Schm\"udgen]\label{thm:schmudgenpositive}
Let \(f=(f_1,\dots,f_s)\subset\Rxd\), and assume that \(K(f)\) is compact. If \(g\in\Rxd\) satisfies
\[
g(x)>0 \qquad \text{for all }x\in K(f),
\]
then
\[
g\in T(f).
\]
\end{theorem}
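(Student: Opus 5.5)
We argue by contradiction, combining the three tools assembled in this section: Archimedeanity of \(T(f)\) from Proposition \ref{prop:compactimpliesarch}, the separation result of Proposition \ref{prop:separation}, and the positivity extension in Proposition \ref{prop:boundL}. Suppose \(g>0\) on \(K(f)\) but \(g\notin T(f)\). Since \(K(f)\) is compact, Proposition \ref{prop:compactimpliesarch} shows that \(T(f)\) is an Archimedean quadratic module (indeed a preordering) in \(\Rxd\). Proposition \ref{prop:separation}, applied with \(Q=T(f)\) and \(a_0=g\), then gives a \(T(f)\)-positive linear functional \(\varphi\) with \(\varphi(1)=1\) and \(\varphi(g)\le 0\).

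Next we transfer the positivity of \(\varphi\) from \(T(f)\) to all of \(\Pos(K(f))\). Since \(\varphi\) is \(T(f)\)-positive and \(K(f)\) is compact, Proposition \ref{prop:boundL}\textup{(ii)} gives \(\varphi(p)\ge 0\) for every \(p\) nonnegative on \(K(f)\).

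Now we use the strictness of the hypothesis. If \(K(f)=\varnothing\), then Theorem \ref{thm:ks}\textup{(4)} gives \(-1\in T(f)\). Because \(T(f)\) contains all squares, the identity
\[
g=\tfrac14\bigl((g+1)^2+(-1)(g-1)^2\bigr)
\]
places \(g\) in \(T(f)\) directly. We may therefore assume \(K(f)\neq\varnothing\). By compactness and continuity, \(g\) attains a minimum \(\eps>0\) on \(K(f)\), so \(g-\eps\ge 0\) on \(K(f)\). Hence
\[
\varphi(g)=\varphi(g-\eps)+\eps\,\varphi(1)\ge \eps>0,
\]
which contradicts \(\varphi(g)\le 0\). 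Therefore \(g\in T(f)\). Alternatively, one could apply Theorem \ref{thm:rieszhaviland} to represent \(\varphi\) by a probability measure on \(K(f)\) and integrate \(g\) against it, but the direct argument above is shorter.

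The main obstacle is the step supplied by Proposition \ref{prop:boundL}\textup{(ii)}: passing from positivity on the algebraic cone \(T(f)\) to positivity on every polynomial nonnegative on \(K(f)\). This is where the real analytic content lies, namely the approximation of \(\sqrt{p+\eps}\) together with the sup-norm bound \(|\varphi(p)|\le\|p\|_{\infty,K(f)}\). That bound in turn depends on Archimedeanity, which Proposition \ref{prop:compactimpliesarch} obtained from the Krivine--Stengle certificate. Since both results are already established earlier in the paper, the proof reduces to the separation argument and the short contradiction above. The one remaining technical point is that Proposition \ref{prop:separation} needs \(1\) to be an algebraic interior point of \(T(f)\), and this is exactly what Archimedeanity provides.
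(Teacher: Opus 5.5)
Your proof is correct and follows the paper's argument: Archimedeanity of \(T(f)\), a separating \(T(f)\)-positive functional with \(\varphi(1)=1\) and \(\varphi(g)\le 0\), and Proposition~\ref{prop:boundL} to reach a contradiction. The differences are cosmetic. You apply part~(ii) of that proposition directly to \(g-\eps\), whereas the paper inlines the Stone--Weierstrass approximation of \(\sqrt{g-\delta}\) together with part~(i), and your separate treatment of \(K(f)=\varnothing\) is valid and harmless, since in that case \(T(f)=\Rxd\).
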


\begin{proof}
Assume for contradiction that \(g\notin T(f)\). By Proposition \ref{prop:compactimpliesarch}, the preordering \(T(f)\) is Archimedean, so Proposition \ref{prop:separation} yields a \(T(f)\)-positive linear functional \(L\) with
\[
L(1)=1 \qquad\text{and}\qquad L(g)\le 0.
\]
Since \(g\) is strictly positive on the compact set \(K(f)\), there exists \(\delta>0\) such that
\[
g-\delta>0 \qquad \text{on }K(f).
\]
The continuous function \(\sqrt{g-\delta}\) can be approximated uniformly on \(K(f)\) by polynomials \((p_n)\) by Stone--Weierstrass, so
\[
p_n^2\to g-\delta \qquad \text{uniformly on }K(f).
\]
By Proposition \ref{prop:boundL},
\[
L(p_n^2-g+\delta)\to 0.
\]
But \(L(p_n^2)\ge 0\) because \(p_n^2\in T(f)\), hence
\[
L(p_n^2-g+\delta)=L(p_n^2)-L(g)+\delta L(1)\ge \delta>0,
\]
a contradiction.
\end{proof}

\begin{remark}
This theorem is the key bridge from semialgebraic geometry to the moment problem. It upgrades a positivity assumption on the explicit cone \(T(f)\) to positivity on every polynomial that is merely nonnegative on \(K(f)\), after perturbation by \(\eps>0\).
\end{remark}

\subsection{The compact \(K\)-moment theorem}

\begin{theorem}\label{thm:compactkmomentgeom}
Let \(f=(f_1,\dots,f_s)\subset\Rxd\), and assume that \(K(f)\) is compact. If \(L:\Rxd\to\R\) is \(T(f)\)-positive, then \(L\) is a \(K(f)\)-moment functional. Equivalently, there exists a nonnegative Radon measure \(\mu\) on \(\R^d\), supported on \(K(f)\), such that
\[
L(p)=\int_{K(f)} p\,d\mu \qquad \text{for all }p\in\Rxd.
\]
\end{theorem}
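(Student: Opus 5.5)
The plan is to reduce the statement to the Riesz--Haviland theorem (Theorem \ref{thm:rieszhaviland}) applied to the closed set \(K:=K(f)\). Since \(K(f)\) is an intersection of preimages of \([0,\infty)\) under continuous maps, it is closed, so Theorem \ref{thm:rieszhaviland} applies. It therefore suffices to show that \(L(p)\ge 0\) for every \(p\in\Pos(K(f))\). Once that is done, Theorem \ref{thm:rieszhaviland} directly produces a nonnegative Radon measure \(\mu\) supported on \(K(f)\) representing \(L\), which is exactly the claimed conclusion.

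To verify positivity on \(\Pos(K(f))\), fix \(p\) with \(p\ge 0\) on \(K(f)\) and an arbitrary \(\eps>0\). Then \(p+\eps>0\) on \(K(f)\), and since \(K(f)\) is compact, Schm\"udgen's strict positivity theorem (Theorem \ref{thm:schmudgenpositive}) gives \(p+\eps\in T(f)\). By \(T(f)\)-positivity of \(L\) we get
\[
L(p)+\eps L(1)=L(p+\eps)\ge 0 .
\]
Here \(L(1)\ge 0\) because \(1\in T(f)\). Letting \(\eps\to 0\) yields \(L(p)\ge 0\). Alternatively, one can simply cite Proposition \ref{prop:boundL}\textup{(ii)}, which states precisely this implication. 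I would nevertheless present the \(\eps\)-argument via Theorem \ref{thm:schmudgenpositive}, since it is self-contained given what has been proved.

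The only potentially delicate point is the passage from strict to non-strict positivity, that is, the fact that Schm\"udgen's theorem handles only \(g>0\) on \(K(f)\). The \(\eps\)-perturbation resolves this cleanly because \(L\) is linear and \(L(1)\) is a finite nonnegative number. No limiting argument involving \(\|\cdot\|_{\infty,K(f)}\) is needed.

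A minor remark is the degenerate case \(K(f)=\varnothing\). Then \(-1\in T(f)\) by Theorem \ref{thm:ks}\textup{(4)}, so \(T(f)\)-positivity forces \(L(1)\ge0\) and \(L(-1)\ge 0\), hence \(L(1)=0\). Since also \(\Pos(\varnothing)=\Rxd\), the argument above shows \(L(\pm p)\ge0\), so \(L\equiv0\), which is represented by the zero measure. This is consistent with Theorem \ref{thm:rieszhaviland}, so no separate treatment is really required.
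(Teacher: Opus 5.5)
Your proof is correct and matches the paper's argument: the \(\eps\)-perturbation combined with Schm\"udgen's strict positivity theorem gives \(L\ge 0\) on \(\Pos(K(f))\), and Theorem \ref{thm:rieszhaviland} then supplies the representing measure. Your extra remarks on the closedness of \(K(f)\) and on the case \(K(f)=\varnothing\) are correct but not needed, and the sign of \(L(1)\) plays no role when you let \(\eps\downarrow 0\).
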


\begin{proof}
Let \(p\in\Pos(K(f))\) and \(\eps>0\). Then \(p+\eps>0\) on \(K(f)\), so Theorem \ref{thm:schmudgenpositive} implies \(p+\eps\in T(f)\). Since \(L\) is \(T(f)\)-positive,
\[
L(p)+\eps L(1)=L(p+\eps)\ge 0.
\]
Letting \(\eps\downarrow 0\) gives \(L(p)\ge 0\) for every \(p\in\Pos(K(f))\). Theorem \ref{thm:rieszhaviland} therefore yields a representing measure supported on \(K(f)\).
\end{proof}

\section{Putinar's theorem and the role of quadratic modules}\label{sec:putinar}

Schm\"udgen's theorem uses the full preordering \(T(f)\). Putinar's theorem shows that under a stronger structural hypothesis one may work with the much smaller quadratic module \(Q(f)\).

\begin{theorem}[Putinar]\label{thm:putinar}
Let \(f=(f_1,\dots,f_s)\subset\Rxd\), and assume that the quadratic module \(Q(f)\) is Archimedean. If \(g\in\Rxd\) satisfies
\[
g(x)>0 \qquad \text{for all }x\in K(f),
\]
then
\[
g\in Q(f).
\]
\end{theorem}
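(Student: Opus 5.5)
The plan is to follow the template of the proof of Theorem \ref{thm:schmudgenpositive}, with \(Q(f)\) in place of \(T(f)\). The missing ingredient is a substitute for Proposition \ref{prop:boundL}: we cannot use approximation by squares alone, because we must show that a \(Q(f)\)-positive functional is actually given by a measure on \(K(f)\). So suppose, for contradiction, that \(g\notin Q(f)\). Since \(Q(f)\) is Archimedean by hypothesis, Proposition \ref{prop:separation} gives a \(Q(f)\)-positive linear functional \(L\) with \(L(1)=1\) and \(L(g)\le 0\). It then suffices to show that \(L\) is represented by a probability measure \(\mu\) supported on \(K(f)\). Indeed, \(g>0\) on the compact set \(K(f)\) (compact by Corollary \ref{cor:archcompact}) would give \(L(g)=\int g\,d\mu>0\), a contradiction.

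To produce \(\mu\), I would use the GNS construction.
\begin{enumerate}[label=\textup{(\arabic*)}, leftmargin=2em]
\item Define \(\langle p,q\rangle:=L(pq)\). This is positive semidefinite because squares lie in \(Q(f)\). Quotient by the null space \(N=\{p: L(p^2)=0\}\), which is an ideal by Cauchy--Schwarz, and complete to obtain a Hilbert space \(\Hcal\).
\item Let \(X_j\) denote multiplication by \(x_j\). By Corollary \ref{cor:archpoly} there is \(\lambda_j\) with \(\lambda_j-x_j^2\in Q(f)\). Hence
\[
\lambda_j L(p^2)-L(x_j^2p^2)=L\bigl(p^2(\lambda_j-x_j^2)\bigr)\ge 0,
\]
because \(p^2Q(f)\subseteq Q(f)\). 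So each \(X_j\) is bounded, with \(\|X_j\|\le\sqrt{\lambda_j}\). The \(X_j\) also preserve \(N\), so they extend to bounded, commuting, self-adjoint operators on \(\Hcal\).
\item Apply the spectral theorem for commuting bounded self-adjoint operators to get a joint spectral measure \(E\) on \(\R^d\) with compact support. Setting \(\mu(\cdot):=\langle E(\cdot)1,1\rangle\) gives \(L(p)=\langle p(X)1,1\rangle=\int p\,d\mu\) for all \(p\).
\end{enumerate}

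The main obstacle is showing that \(\supp\mu\subseteq K(f)\). The key observation is that for each generator \(f_i\), the operator \(f_i(X)\) is positive. Indeed, \(\langle f_i(X)p,p\rangle=L(f_ip^2)\ge 0\) because \(p^2f_i\in Q(f)\), and by density this holds on all of \(\Hcal\). By the spectral theorem, \(f_i(X)=\int f_i\,dE\ge 0\) forces \(f_i\ge 0\) \(E\)-almost everywhere on the joint spectrum. So the joint spectrum, and hence \(\supp\mu\), lies in \(\bigcap_i\{f_i\ge 0\}=K(f)\).

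I expect this support step to need the most care. One must check that positivity of \(\int f_i\,dE\) as an operator really gives \(E(\{f_i<0\})=0\). I would compress to \(E(\{f_i<-\eps\})\) and obtain a negative quadratic form unless that projection vanishes. One must also check that the spectral support, rather than merely the support of \(\mu\), is what controls this.

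An alternative, avoiding operators, is to show directly that \(L\) is bounded by \(\|\cdot\|_{\infty,K(f)}\) on all polynomials. That is essentially equivalent in difficulty, so I would commit to the GNS route.
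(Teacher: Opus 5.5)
The paper never actually proves Theorem~\ref{thm:putinar}. It states the theorem, derives Corollary~\ref{cor:putinar_moment} from it, and only points to Jacobi's representation theorem as the conceptual explanation. Your argument is correct and fills that gap with the paper's own tools: in effect you attach the separation step of Theorem~\ref{thm:schmudgenpositive} to the GNS proof of Section~\ref{sec:operator}.

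This combination is legitimate. Proposition~\ref{prop:separation} needs only Archimedeanity, which is now the hypothesis. Section~\ref{sec:operator} never uses closure under products: Lemma~\ref{lem:Mjbounded} and the support step only need \(h^2\), \((c_j-x_j^2)p^2\) and \(f_ih^2\) to lie in the cone, and that holds for \(Q(f)\).

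Your compression argument with \(E(\{f_i<-\eps\})\) is a sound operator-level substitute for the paper's scalar argument, which extends \(\int f_ih^2\,d\mu\ge 0\) to continuous \(u\ge 0\). Since \(\mu(\R^d\setminus K(f))=\ip{E(\R^d\setminus K(f))v}{v}\) with \(v=1+N\), nothing further about spectral versus scalar support needs checking. Also, \(\mu(\R^d)=L(1)=1\) disposes of the case \(K(f)=\varnothing\) automatically.

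Compared with the template of Section~\ref{sec:compactgeom}, your route needs no \(Q(f)\)-analogue of Proposition~\ref{prop:boundL}. Part (i) of that proposition is the real content there. Order-boundedness alone only gives \(|L(p)|\le\lambda L(1)\) for some \(\lambda\) with \(\lambda\pm p\) in the cone, and identifying the best such \(\lambda\) with \(\|p\|_{\infty,K(f)}\) is already of Positivstellensatz strength. Your spectral measure yields that bound directly. It also yields Corollary~\ref{cor:putinar_moment} without Haviland's theorem.

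Applied to the quadratic module generated by all products \(f_1^{e_1}\cdots f_s^{e_s}\), which is \(T(f)\), your proof also recovers Theorem~\ref{thm:schmudgenpositive} from Proposition~\ref{prop:compactimpliesarch}. So the difference between the two theorems lies entirely in the Archimedean hypothesis, not in the representation argument.

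The Jacobi route the paper alludes to buys generality: it covers Archimedean quadratic modules in arbitrary commutative \(\R\)-algebras, with the character space in place of \(K(f)\). The cost is a more abstract order-theoretic proof. Yours exploits that \(\Rxd\) is finitely generated, so a joint spectral measure on \(\R^d\) is available.
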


\begin{remark}
The conclusion is stronger than Schm\"udgen's because \(Q(f)\subseteq T(f)\). The hypothesis is also stronger: Archimedeanity of \(Q(f)\) is not automatic from compactness of \(K(f)\), whereas Archimedeanity of \(T(f)\) is.
\end{remark}

\begin{corollary}\label{cor:putinar_moment}
Under the hypotheses of Theorem \ref{thm:putinar}, every \(Q(f)\)-positive linear functional on \(\Rxd\) is a \(K(f)\)-moment functional.
\end{corollary}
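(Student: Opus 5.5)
The plan is to copy the proof of Theorem \ref{thm:compactkmomentgeom} almost verbatim, with Putinar's theorem (Theorem \ref{thm:putinar}) in place of Schm\"udgen's theorem, and to finish with Haviland's theorem (Theorem \ref{thm:rieszhaviland}).

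First I will check that Haviland's theorem applies. Since \(Q(f)\) is Archimedean, Corollary \ref{cor:archcompact} shows that \(K(f)\) is compact. In particular \(K(f)\) is closed, which is all Theorem \ref{thm:rieszhaviland} requires.

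Now let \(L\) be \(Q(f)\)-positive and take \(p\in\Pos(K(f))\) and \(\eps>0\).

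\begin{enumerate}[label=\textup{(\alph*)}, leftmargin=2em]
\item The polynomial \(p+\eps\) is strictly positive on \(K(f)\). Theorem \ref{thm:putinar} therefore gives \(p+\eps\in Q(f)\).
\item Positivity of \(L\) on \(Q(f)\) yields \(L(p)+\eps L(1)\ge 0\).
\item The inequality \(L(1)\ge 0\) holds because \(1\in Q(f)\), and \(L(1)\) is a fixed finite number. Letting \(\eps\downarrow 0\) gives \(L(p)\ge 0\).
\end{enumerate}

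Hence \(L\) is nonnegative on all of \(\Pos(K(f))\). Theorem \ref{thm:rieszhaviland} then produces a nonnegative Radon measure supported on \(K(f)\) that represents \(L\).

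There is no real obstacle once Putinar's theorem is granted. The only point needing care is the closedness of \(K(f)\) required by Haviland's theorem. One could also note that this closedness is automatic, since \(K(f)\) is cut out by polynomial inequalities. Compactness is not actually needed for the argument beyond this.
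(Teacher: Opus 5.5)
Your proposal is correct and matches the paper's proof: you use Putinar's theorem to get \(p+\eps\in Q(f)\), then positivity of \(L\) and \(\eps\downarrow 0\) give \(L\ge 0\) on \(\Pos(K(f))\), and Haviland's theorem produces the measure. Your side remarks are also accurate: \(K(f)\) is automatically closed, and beyond Putinar's theorem itself compactness plays no further role.
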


\begin{proof}
The proof is formally identical to that of Theorem \ref{thm:compactkmomentgeom}, with \(Q(f)\) replacing \(T(f)\): strict positivity gives membership in \(Q(f)\), hence positivity on \(\Pos(K(f))\) after perturbation, and Haviland's theorem yields a representing measure.
\end{proof}

\begin{remark}[Schm\"udgen versus Putinar]
The practical difference between the two theorems is substantial.
\begin{enumerate}[label=\textup{(\arabic*)}, leftmargin=2em]
\item Schm\"udgen's theorem is universal for compact basic closed semialgebraic sets, but involves the larger cone \(T(f)\), whose size grows exponentially with the number of generators.
\item Putinar's theorem is more economical, because \(Q(f)\) only involves the original defining polynomials linearly. This is especially important in semidefinite optimization.
\item The price is the extra assumption that \(Q(f)\) be Archimedean. A common sufficient condition is the explicit existence of a polynomial of the form \(N-\sum_j x_j^2\) in \(Q(f)\).
\end{enumerate}
\end{remark}

A useful conceptual interpretation is provided by Jacobi's representation theorem. Roughly speaking, for an Archimedean quadratic module, positivity on the associated character space is equivalent to algebraic membership in the module. Putinar's theorem can be viewed as a concrete polynomial incarnation of this principle.

\section{An operator-theoretic proof}\label{sec:operator}

The geometric proof emphasizes positivity certificates. There is, however, a complementary functional-analytic proof that reconstructs the measure directly from the functional. This point of view is especially natural in the broader theory of moment problems on \(*\)-algebras and semigroups.

\subsection{The GNS construction}

Let \(L:\Rxd\to\R\) be a \(T(f)\)-positive linear functional. Define a bilinear form by
\[
\ip{p}{q}:=L(pq), \qquad p,q\in\Rxd.
\]
Because \(L(h^2)\ge 0\) for every polynomial \(h\), this form is positive semidefinite.

\begin{definition}
Set
\[
N:=\{p\in\Rxd: L(p^2)=0\}.
\]
\end{definition}

\begin{lemma}
The set \(N\) is an ideal in \(\Rxd\).
\end{lemma}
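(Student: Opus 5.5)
The plan is to use the Cauchy--Schwarz inequality for the positive semidefinite form \(\ip{p}{q}=L(pq)\). The first step is to record that, since \(L(h^2)\ge 0\) for all \(h\in\Rxd\), the usual discriminant argument applies. For real \(t\) we have \(0\le L((p+tq)^2)=L(p^2)+2tL(pq)+t^2L(q^2)\), and this gives
\[
L(pq)^2\le L(p^2)\,L(q^2)\qquad\text{for all }p,q\in\Rxd.
\]
If \(L(q^2)=0\), the quadratic in \(t\) degenerates to a linear function that must stay nonnegative, which forces \(L(pq)=0\). So the inequality holds in all cases. A consequence is the alternative description
\[
N=\{p\in\Rxd: L(pq)=0\text{ for all }q\in\Rxd\}.
\]
Indeed, if \(L(p^2)=0\) then Cauchy--Schwarz gives \(L(pq)=0\) for all \(q\), and conversely one takes \(q=p\).

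With this description, closure of \(N\) under addition and scalar multiplication is immediate, since \(q\mapsto L(pq)\) is linear in \(p\). For absorption, let \(p\in N\) and \(r\in\Rxd\). For every \(q\) we have \(L((rp)q)=L(p(rq))=0\), because \(p\) annihilates every polynomial, in particular \(rq\). Hence \(rp\in N\). Clearly \(0\in N\), so \(N\) is an ideal.

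The main obstacle is only the degenerate case \(L(q^2)=0\) in Cauchy--Schwarz, which the linear-in-\(t\) argument above handles. The proof uses nothing beyond \(\sos\)-positivity of \(L\); in particular it does not need the constraints \(f_i\) or compactness.
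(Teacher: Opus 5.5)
Your proof is correct and takes essentially the paper's route: both arguments rest on the Cauchy--Schwarz inequality for the positive semidefinite form \(\ip{p}{q}=L(pq)\). Your reformulation \(N=\{p: L(pq)=0\text{ for all }q\}\) folds the paper's two steps into one observation. Those steps are absorption, via Cauchy--Schwarz applied to \((pr)^2=p\cdot pr^2\), and additivity, via \(L((p+q)^2)\le 2L(p^2)+2L(q^2)\). You also justify the degenerate case of Cauchy--Schwarz, which the paper takes for granted.
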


\begin{proof}
If \(p\in N\) and \(r\in\Rxd\), then the Cauchy--Schwarz inequality for the positive semidefinite form gives
\[
|L(pr)|^2\le L(p^2)L(r^2)=0.
\]
Hence \(L(pr)=0\). Applying the same argument to \((pr)^2\) shows \(pr\in N\). Closure under addition is immediate from
\[
L((p+q)^2)\le 2L(p^2)+2L(q^2)
\]
for \(p,q\in N\).
\end{proof}

Therefore the quotient \(\Rxd/N\) carries a genuine inner product, again denoted \(\ip{\cdot}{\cdot}\), and we let \(\Hcal_L\) be its Hilbert space completion.

\subsection{Multiplication operators}

For each \(j=1,\dots,d\), define
\[
M_j(p+N):=x_jp+N, \qquad p\in\Rxd.
\]
These are the multiplication operators suggested by the polynomial structure itself.

\begin{lemma}\label{lem:Mjbounded}
Assume that \(T(f)\) is Archimedean. Then each \(M_j\) extends to a bounded self-adjoint operator on \(\Hcal_L\).
\end{lemma}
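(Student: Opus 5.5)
The plan is to show directly that each \(M_j\) is well defined on \(\Rxd/N\), bounded with respect to the inner product, and symmetric. The unique continuous extension to \(\Hcal_L\) is then a bounded self-adjoint operator.

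\emph{Symmetry and the basic identity.} For \(p,q\in\Rxd\) we have
\[
\ip{M_j(p+N)}{q+N}=L(x_jpq)=\ip{p+N}{M_j(q+N)},
\]
so \(M_j\) is symmetric on the dense subspace \(\Rxd/N\). Well-definedness amounts to \(x_jN\subseteq N\), which holds because \(N\) is an ideal by the preceding lemma.

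\emph{Boundedness.} This is the main step. Since \(T(f)\) is Archimedean, Corollary \ref{cor:archpoly} gives \(\lambda_j>0\) with \(\lambda_j-x_j^2\in T(f)\). Because \(T(f)\) is a quadratic module, \(p^2(\lambda_j-x_j^2)\in T(f)\) for every \(p\in\Rxd\). Applying the \(T(f)\)-positivity of \(L\) gives
\[
\|M_j(p+N)\|^2=L(x_j^2p^2)\le \lambda_j L(p^2)=\lambda_j\|p+N\|^2 .
\]
Hence \(\|M_j\|\le\sqrt{\lambda_j}\) on \(\Rxd/N\). The only real input here is Archimedeanity, which replaces any analytic estimate. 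Notice also that this inequality by itself already shows \(x_jN\subseteq N\), so the well-definedness check above is redundant.

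\emph{Extension.} A bounded linear map on a dense subspace extends uniquely by continuity to a bounded operator on \(\Hcal_L\) with the same norm. The symmetry identity passes to the limit, because the inner product is jointly continuous. The extension is therefore a bounded symmetric operator defined on all of \(\Hcal_L\), and such an operator is self-adjoint.

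I expect no genuine obstacle. The one point requiring care is invoking the \(T(f)\)-positivity of \(L\) on \(p^2(\lambda_j-x_j^2)\) rather than mere positivity on squares.
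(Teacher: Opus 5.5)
Your proof is correct and follows essentially the same route as the paper: Corollary~\ref{cor:archpoly} gives $\lambda_j-x_j^2\in T(f)$, the $T(f)$-positivity of $L$ on $p^2(\lambda_j-x_j^2)$ yields $\|M_j(p+N)\|^2\le\lambda_j\|p+N\|^2$, and a bounded symmetric operator defined on all of $\mathcal{H}_L$ is self-adjoint. Your explicit check that $M_j$ is well defined, i.e.\ that $x_jN\subseteq N$ (which your bound also yields), covers a point the paper leaves implicit.
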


\begin{proof}
Since \(T(f)\) is Archimedean, Corollary \ref{cor:archpoly} gives \(c_j>0\) with
\[
c_j-x_j^2\in T(f).
\]
For every \(p\in\Rxd\),
\[
0\le L\bigl((c_j-x_j^2)p^2\bigr)=c_jL(p^2)-L(x_j^2p^2),
\]
whence
\[
\|M_j(p+N)\|^2=L(x_j^2p^2)\le c_jL(p^2)=c_j\|p+N\|^2.
\]
Thus \(M_j\) is bounded on the dense subspace \(\Rxd/N\) and extends uniquely to \(\Hcal_L\). Symmetry is immediate:
\[
\ip{M_j(p+N)}{q+N}=L(x_jpq)=\ip{p+N}{M_j(q+N)}.
\]
A bounded symmetric operator is self-adjoint, and the \(M_j\) commute because multiplication in \(\Rxd\) is commutative.
\end{proof}

\subsection{The spectral theorem}

\begin{theorem}\label{thm:compactkmomentoperator}
Let \(f=(f_1,\dots,f_s)\subset\Rxd\), and assume that \(T(f)\) is Archimedean. If \(L:\Rxd\to\R\) is \(T(f)\)-positive, then \(L\) is a \(K(f)\)-moment functional.
\end{theorem}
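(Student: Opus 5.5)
The plan is to build the measure from the GNS data via the multivariable spectral theorem, and then localize its support using $T(f)$-positivity. By Lemma~\ref{lem:Mjbounded}, the closures $\overline{M_1},\dots,\overline{M_d}$ are pairwise commuting bounded self-adjoint operators on $\Hcal_L$. They commute on the dense subspace $\Rxd/N$, hence everywhere by continuity. The spectral theorem for finitely many commuting bounded self-adjoint operators supplies a joint projection-valued measure $E$ on $\R^d$. Its support is compact, contained in the product of the spectra, with $\overline{M_j}=\int x_j\,dE(x)$. The functional calculus then gives $p(\overline{M})=\int p\,dE$ for every $p\in\Rxd$.

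Next set $\xi:=1+N$ and $\mu(\cdot):=\ip{E(\cdot)\xi}{\xi}$, a finite nonnegative Radon measure with compact support. Since $p(\overline{M})\xi=p+N$ (by induction on monomials), we obtain
\[
L(p)=L(p\cdot 1)=\ip{p(\overline{M})\xi}{\xi}=\int_{\R^d}p\,d\mu
\]
for all $p\in\Rxd$. So $L$ is a moment functional on $\R^d$, and it remains to show $\supp\mu\subseteq K(f)$.

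The main obstacle is this support localization. I would show that each operator $f_i(\overline{M})$ is positive. For $q\in\Rxd$,
\[
\ip{f_i(\overline{M})(q+N)}{q+N}=L(f_iq^2)\ge 0,
\]
because $f_iq^2\in T(f)$. By density of $\Rxd/N$ and boundedness of $f_i(\overline{M})$, this gives $f_i(\overline{M})\ge 0$. By the spectral mapping property, $f_i\ge 0$ on $\supp E$, which is the joint spectrum.

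To spell out the last step: if some $x_0\in\supp E$ had $f_i(x_0)<0$, choose an open neighbourhood $U$ of $x_0$ on which $f_i<-\delta$. Then $E(U)\ne 0$, and any $\eta\in E(U)\Hcal_L$ with $\eta\ne 0$ satisfies
\[
\ip{f_i(\overline{M})\eta}{\eta}=\int_U f_i\,d\ip{E\eta}{\eta}\le-\delta\|\eta\|^2<0,
\]
which contradicts positivity. Hence $\supp\mu\subseteq\supp E\subseteq K(f)$, and $\mu$ is the required representing measure supported on $K(f)$.

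Note that only Archimedeanity, for the boundedness of the $M_j$, and positivity of $L$ on the elements $f_iq^2$ are used. So the same argument also proves the analogous statement for an Archimedean $Q(f)$.
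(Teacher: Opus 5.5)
Your proof is correct and follows the paper's route: the GNS space, bounded commuting self-adjoint $M_j$ obtained from Archimedeanity, the joint spectral theorem applied to $1+N$, and support localization driven only by $L(f_ih^2)\ge 0$. The one variation is in the localization step: you show that $f_i(\overline{M})$ is a positive operator and conclude $f_i\ge 0$ on the whole joint spectrum, whereas the paper extends $\int f_ih^2\,d\mu\ge 0$ to all continuous $u\ge 0$ by polynomial approximation on a compact cube; both arguments work, and your remark that the same proof gives the $Q(f)$ version (Corollary~\ref{cor:putinar_moment}) without invoking Putinar's theorem is also correct.
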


\begin{proof}
Let \(v:=1+N\in \Hcal_L\). By the joint spectral theorem for commuting bounded self-adjoint operators, there exists a unique nonnegative Radon measure \(\mu\) on \(\R^d\) such that
\[
\ip{v}{M_1^{\alpha_1}\cdots M_d^{\alpha_d}v}=\int_{\R^d} x^\alpha\,d\mu(x)
\qquad \text{for all }\alpha\in\N_0^d.
\]
The left-hand side is exactly \(L(x^\alpha)\), so
\[
L(p)=\int_{\R^d} p\,d\mu \qquad \text{for all }p\in\Rxd.
\]
It remains to prove that \(\supp(\mu)\subseteq K(f)\). Let \(i\in\{1,\dots,s\}\) and \(h\in\Rxd\). Since \(L\) is \(T(f)\)-positive,
\[
0\le L(f_ih^2)=\int_{\R^d} f_i h^2\,d\mu.
\]
By density of polynomials in the continuous functions on any compact cube containing the support, the same inequality extends to every continuous \(u\ge 0\). If the set where \(f_i<0\) had positive \(\mu\)-measure, one could choose such a function \(u\) supported there and get a contradiction. Hence \(f_i\ge 0\) \(\mu\)-almost everywhere. Since this holds for every \(i\), the support is contained in \(K(f)\).
\end{proof}

\begin{corollary}
If \(K(f)\) is compact and \(L\) is \(T(f)\)-positive, then \(L\) is a \(K(f)\)-moment functional.
\end{corollary}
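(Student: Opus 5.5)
This corollary follows by combining two results already proved in this section and the previous one, so the plan is short. First, compactness of \(K(f)\) gives Archimedeanity of the preordering via Proposition \ref{prop:compactimpliesarch}: since \(K(f)\) is compact, \(T(f)\) is Archimedean. No further use of compactness is needed beyond this algebraic consequence. The point is that the operator-theoretic route requires only the algebraic hypothesis of Theorem \ref{thm:compactkmomentoperator}.

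Second, we apply Theorem \ref{thm:compactkmomentoperator} to the given \(T(f)\)-positive functional \(L\). Its hypotheses are exactly that \(T(f)\) is Archimedean and that \(L\) is \(T(f)\)-positive. The theorem then produces a nonnegative Radon measure \(\mu\) with \(L(p)=\int p\,d\mu\) for all polynomials \(p\) and \(\supp(\mu)\subseteq K(f)\). Internally this rests on three ingredients:
\begin{itemize}
\item the GNS space \(\Hcal_L\);
\item the bounded commuting self-adjoint operators \(M_j\) from Lemma \ref{lem:Mjbounded}, where the bounds come from \(c_j-x_j^2\in T(f)\) as in Corollary \ref{cor:archpoly};
\item the joint spectral theorem.
\end{itemize}

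The only step that deserves attention is the support claim inside Theorem \ref{thm:compactkmomentoperator}, which the corollary inherits. I would make it explicit as follows. The measure \(\mu\) is the spectral measure of the tuple \((M_1,\dots,M_d)\) at \(v\), so it is supported in the compact joint spectrum, which lies in the box \(\prod_j[-\sqrt{c_j},\sqrt{c_j}]\).

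The inequalities \(\int f_ih^2\,d\mu\ge 0\) then pass from squares of polynomials to all continuous \(u\ge 0\). To see this, write \(u=(\sqrt u)^2\) and approximate \(\sqrt u\) uniformly on the box by polynomials using Stone--Weierstrass.

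A continuous bump supported in the open set \(\{f_i<0\}\) then shows that \(\mu(\{f_i<0\})=0\) for each \(i\). Hence \(\supp\mu\subseteq K(f)\), using that \(K(f)\) is closed.

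As a cross-check, one can also note that the conclusion agrees with Theorem \ref{thm:compactkmomentgeom}, which gives the same statement by the Schmüdgen–Haviland route. The corollary thus confirms that both proofs cover exactly the compact case.
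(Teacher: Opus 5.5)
Your proposal is correct and follows the paper's proof: Proposition~\ref{prop:compactimpliesarch} gives that $T(f)$ is Archimedean, and Theorem~\ref{thm:compactkmomentoperator} then yields the representing measure supported on $K(f)$. Your added detail on the support claim is also sound, and it sharpens a step the paper's proof of that theorem states only loosely. You obtain compact support from the joint spectrum lying in $\prod_j[-\sqrt{c_j},\sqrt{c_j}]$, and you approximate $\sqrt{u}$ rather than $u$, so that only the known inequalities for $f_ih^2$ are used.
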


\begin{proof}
By Proposition \ref{prop:compactimpliesarch}, compactness of \(K(f)\) implies that \(T(f)\) is Archimedean, and the result follows from Theorem \ref{thm:compactkmomentoperator}.
\end{proof}

\begin{remark}
The geometric proof and the operator-theoretic proof highlight different aspects of the same theorem. The first is driven by positivity certificates, while the second shows that a positive functional generates its own Hilbert space and spectral measure. The coexistence of these viewpoints is one of the subject's most attractive features.
\end{remark}

\section{Determinacy and uniqueness}\label{sec:determinacy}

Existence of a representing measure is only part of the story. Even when a measure exists, it may or may not be unique.

\begin{definition}
A \(K\)-moment functional \(L\) is called \emph{determinate} if it admits exactly one representing measure supported on \(K\).
\end{definition}

Compact support automatically implies determinacy, because continuous functions on a compact set are separated by polynomials after uniform approximation. In the noncompact setting the situation is subtler.

\begin{theorem}[Carleman criterion, one-dimensional form]\label{thm:carleman}
Let \((s_n)_{n\ge 0}\) be a Hamburger moment sequence. If
\[
\sum_{n=1}^{\infty} s_{2n}^{-1/2n}=\infty,
\]
then the representing measure on \(\R\) is unique.
\end{theorem}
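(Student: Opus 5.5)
The plan is the classical route through quasi-analytic vectors and self-adjointness, rather than through a direct Fourier argument. Let \(L(p)=\int p\,d\mu\) for some representing measure \(\mu\), so that \(L(t^n)=s_n\). As in Section \ref{sec:operator}, I will build the GNS space \(\Hcal_L\) from the form \(\ip{p}{q}=L(pq)\). On the dense domain \(\R[t]/N\) I will take the multiplication operator \(M(p+N)=tp+N\). This operator is symmetric but now possibly unbounded, since there is no Archimedean bound. Every representing measure \(\nu\) of \(L\) gives an isometry \(\Hcal_L\to L^2(\nu)\), sending \(p+N\) to \(p\). Under this isometry \(M\) is intertwined with multiplication by \(t\) on \(L^2(\nu)\), and that multiplication operator is self-adjoint. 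So each \(\nu\) yields a self-adjoint extension of \(M\), possibly in a larger Hilbert space.

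The key step is to show that \(M\) is essentially self-adjoint when \(\sum s_{2n}^{-1/2n}=\infty\). I would apply Nussbaum's theorem on quasi-analytic vectors. A symmetric operator whose domain is spanned by quasi-analytic vectors is essentially self-adjoint. By the standard Denjoy--Carleman form of that theorem, it suffices that each vector \(v\) in the domain satisfies
\[
\sum_n \|M^n v\|^{-1/n}=\infty.
\]
For \(v=1+N\) we have \(\|M^n v\|=s_{2n}^{1/2}\), so the hypothesis is exactly this condition. For general \(v=t^k+N\) I will need the shifted sequence \(s_{2n+2k}\) to still satisfy the Carleman divergence. I would prove this by log-convexity of \(s_{2n}\), which comes from Cauchy--Schwarz via \(s_{2n}^2\le s_{2n-2}s_{2n+2}\). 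Log-convexity gives \(s_{2n+2k}^{1/(2n)}\lesssim C\,s_{2n+2k}^{1/(2n+2k)}\), and a standard comparison argument then preserves divergence. Finite linear combinations of quasi-analytic vectors need more care, because quasi-analytic vectors do not form a subspace in general. To sidestep this, I would use the version of Nussbaum's theorem requiring only a total set of quasi-analytic vectors, applied to the monomials; equivalently, I could verify directly that \(\ker(M^*\pm i)=0\).

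Once \(\overline{M}\) is self-adjoint, I will conclude uniqueness. A self-adjoint operator has no proper self-adjoint extension within the same space. I also need to rule out extensions in a larger space, and I would handle this by the standard fact that the spectral measure is determined. Concretely, \(\nu(B)=\ip{E_\nu(B)v}{v}\), where \(E_\nu\) is the spectral measure of multiplication by \(t\) restricted to the closure of the polynomials in \(L^2(\nu)\). Because \(\overline{M}\) is self-adjoint, the closure of the polynomials in \(L^2(\nu)\) reduces the multiplication operator and equals \(\overline{M}\) there. Hence \(\nu(B)=\ip{E(B)v}{v}\) with \(E\) the spectral measure of \(\overline{M}\), independent of \(\nu\). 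Therefore all representing measures coincide.

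The main obstacle is the analytic core, Nussbaum's quasi-analytic vector theorem itself. If I cannot cite it, I would prove directly that \(\ker(M^*\mp i)=0\), or alternatively give a classical proof via the Denjoy--Carleman theorem. For the latter, the Fourier transforms \(\hat\nu_1,\hat\nu_2\) of two representing measures have equal derivatives at \(0\) of all orders. Both transforms lie in the quasi-analytic class determined by \((s_{2n}^{1/2n})\), by the estimate \(|\hat\nu^{(n)}(\xi)|\le s_{2n}^{1/2}s_0^{1/2}\) together with interpolation for odd \(n\). The Denjoy--Carleman theorem then forces \(\hat\nu_1=\hat\nu_2\). This second route is probably the cleaner one to write out, with Denjoy--Carleman taken as the black box.
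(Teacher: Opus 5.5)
The paper gives no proof of this theorem. It is quoted as classical, and the only hint at a method is the later remark that Nussbaum's quasi-analytic criterion ties determinacy to essential self-adjointness of the GNS multiplication operators. Your first route is exactly that argument, and your second is the classical Fourier-analytic proof. Both are correct modulo the black boxes you name: Nussbaum's theorem in its total-set form for the first, and the Denjoy--Carleman theorem plus Fourier uniqueness for the second.

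Your treatment of extensions in a larger space is sound. Suppose $\overline{M}$ is self-adjoint on the closure $H_0$ of the polynomials in $L^2(\nu)$, and $\overline{M}\subseteq A_\nu$, where $A_\nu$ is multiplication by $t$. For nonreal $z$, each $u\in H_0$ equals $(\overline{M}-z)w$, so $(A_\nu-z)^{-1}u=w\in H_0$. Hence $H_0$ reduces $A_\nu$, and $\nu(B)=\ip{E_{\overline{M}}(B)v}{v}$ does not depend on $\nu$. You should complexify the real GNS space of Section~\ref{sec:operator} first.

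The two routes buy different things. The Fourier route is shorter and needs neither Hilbert-space extensions nor a shift lemma. The operator route proves more: $\overline{M}$ is self-adjoint, which in one variable is equivalent to determinacy, and this is the form that extends to several variables. That is why the paper points to it right after the GNS section.

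Two details need fixing.

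\textbf{The shift estimate.} The displayed estimate $s_{2n+2k}^{1/(2n)}\lesssim C\,s_{2n+2k}^{1/(2n+2k)}$ is not justified. The ratio of the two sides is $\exp\bigl(\tfrac{k}{n}\log s_{2n+2k}^{1/(2n+2k)}\bigr)$, which log-convexity alone does not bound; for example, $s_{2n}=e^{n^3}$ is log-convex. Instead, set $m_n=(s_{2n}/s_0)^{1/2}$.
\begin{itemize}
\item Log-convexity makes $m_n/m_{n-1}$ nondecreasing, so $m_n^{1/n}\le m_n/m_{n-1}$.
\item Carleman's inequality gives $\sum_n m_n^{-1/n}\le e\sum_n m_{n-1}/m_n$.
\item Hence $\sum m_n^{-1/n}=\infty$ if and only if $\sum m_{n-1}/m_n=\infty$, and the latter visibly survives the renormalized shift $m_{n+k}/m_k$.
\end{itemize}

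You can in fact skip the shift lemma. For $u\in\ker(M^*-z)$ one has $\ip{M^kv}{u}=\bar z^{\,k}\ip{v}{u}$. So it is enough to show $u\perp v$ for the single cyclic quasi-analytic vector $v=1+N$. Apply the Denjoy--Carleman argument to $t\mapsto\ip{e^{itA_\nu}v}{u}$: it coincides with $e^{i\bar z t}\ip{v}{u}$, which is unbounded on one side unless $\ip{v}{u}=0$.

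\textbf{The Fourier route.} No interpolation for odd $n$ is needed. Cauchy--Schwarz gives $\int|t|^n\,d\nu\le s_{2n}^{1/2}s_0^{1/2}$ for every $n$. Then apply Denjoy--Carleman to the difference $\hat\nu_1-\hat\nu_2$, which lies in the class because the class is a vector space.
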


In several variables there are various multivariate extensions. A common practical principle is that sufficiently strong growth control in each coordinate implies determinacy of the full multidimensional measure. One convenient form is due to Petersen.

\begin{theorem}[Petersen, informal form]\label{thm:petersen}
Let \(\mu\) be a positive Borel measure on \(\R^d\). If each one-dimensional marginal of \(\mu\) is determinate, then \(\mu\) itself is determinate.
\end{theorem}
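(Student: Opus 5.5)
We prove Petersen's theorem through the classical density argument. Let $\nu$ be a positive Borel measure on $\R^d$ with the same moments as $\mu$. We aim to show $\nu=\mu$. We first reduce to a polynomial-density statement. Write $\sigma:=\mu+\nu$; this measure has finite moments, and its one-dimensional marginals $\sigma_j:=\sigma\circ\pi_j^{-1}$ have moments equal to twice those of the marginals $\mu_j$. Since $\nu_j$ and $\mu_j$ share all moments and $\mu_j$ is determinate, we get $\nu_j=\mu_j$, so $\sigma_j=2\mu_j$ is determinate as well.

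The key one-dimensional input is M.~Riesz's theorem: if a measure $\tau$ on $\R$ is determinate, then $\R[t]$ is dense in $L^2(\R,(1+t^2)\,d\tau)$, and in particular dense in $L^2(\tau)$. We take this classical fact as given; it is not among the results stated earlier in the paper. Applied to $\sigma_j$, it shows that every bounded continuous function $\varphi(x_j)$ of a single coordinate can be approximated in $L^2(\sigma)$ by polynomials in $x_j$. This holds because $\int |\varphi(x_j)-p(x_j)|^2\,d\sigma=\int|\varphi-p|^2\,d\sigma_j$.

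Next we pass to products. We claim that for bounded continuous $\varphi_1,\dots,\varphi_d$ on $\R$, the product $\varphi_1(x_1)\cdots\varphi_d(x_d)$ lies in the $L^2(\sigma)$-closure of $\Rxd$. This is the main obstacle: $L^2$-approximants of each factor need not be bounded, so products of approximations are not controlled by naive Hölder estimates. The first attempt is induction with a stronger hypothesis, namely that for every polynomial $q$ the function $q\,\varphi_1\cdots\varphi_k$ lies in the closure. To make this induction run, one needs density of $\R[x_j]$ in $L^2((1+x_j^{2m})\,d\sigma_j)$ for all $m$, that is, M.~Riesz-type density with polynomial weights. If the weighted density is not available for determinate measures, the fallback is Petersen's original route. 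That route applies Riesz's quasi-analytic criterion coordinatewise and uses that the set of functions approximable in this weighted sense is an algebra.

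Granting the product claim, the conclusion is routine. Such products of characters $e^{i\langle\xi,x\rangle}=\prod_j e^{i\xi_jx_j}$, split into real and imaginary parts, are approximable in $L^2(\sigma)$, hence in $L^1(\mu)$ and $L^1(\nu)$, since both are dominated by $\sigma$ and the total mass is finite. Because $\int p\,d\mu=\int p\,d\nu$ for all polynomials $p$, it follows that $\hat\mu(\xi)=\hat\nu(\xi)$ for all $\xi$. By uniqueness of the Fourier transform of finite measures, $\mu=\nu$.
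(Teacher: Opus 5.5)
The paper only quotes this result and gives no proof, so the benchmark here is Petersen's argument. Your reduction is fine: a second representing measure $\nu$, the identity $\nu_j=\mu_j$, and M.~Riesz density. Your closing uniqueness step is also fine. But the proof has a genuine gap exactly where you locate the main obstacle. The product claim asks that every $\varphi_1(x_1)\cdots\varphi_d(x_d)$ lie in the $L^2(\sigma)$-closure of $\Rxd$. Since such products span a dense subspace of $L^2(\sigma)$, this is equivalent to density of $\Rxd$ in $L^2(\sigma)$, which is more than you need, and you never prove it.

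The induction you sketch cannot close, for two reasons.
First, density of $\R[x_j]$ in $L^2((1+x_j^{2m})\,d\sigma_j)$ for all $m$ is false for general determinate measures. By M.~Riesz, density in $L^2((1+t^2)^2\,d\tau)$ is equivalent to determinacy of $(1+t^2)\tau$. There are determinate $\tau$ for which $(1+t^2)\tau$ is indeterminate, for example an N-extremal measure with one atom removed.
Second, even granting that density, it would not suffice. Passing from $q\varphi_1\cdots\varphi_k$ to $q\varphi_1\cdots\varphi_{k+1}$ produces the error $\int q^2\,|\varphi_{k+1}-p_n(x_{k+1})|^2\,d\sigma$. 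The $x_{k+1}$-marginal of $q^2\sigma$ is not dominated by $(1+x_{k+1}^{2m})\sigma_{k+1}$ once $q$ involves other coordinates.

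The fallback is not an argument either. Quasi-analytic (Carleman-type) conditions are sufficient for determinacy, not consequences of it, so they cannot be extracted from the hypothesis.

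The missing idea is to test against $\mu-\nu$ instead of approximating in $L^2(\sigma)$. Then Cauchy--Schwarz puts the polynomial and the one-variable error into separate factors. Prove by induction on $k$ that
\[
\int \varphi_1(x_1)\cdots\varphi_k(x_k)\,q\,d\mu=\int \varphi_1(x_1)\cdots\varphi_k(x_k)\,q\,d\nu
\]
for all bounded Borel $\varphi_1,\dots,\varphi_k$ on $\R$ and all $q\in\Rxd$. The case $k=0$ is equality of moments. For the step, choose $p_n\in\R[x_k]$ with $\|\varphi_k-p_n\|_{L^2(\mu_k)}\to 0$; this unweighted density is all that M.~Riesz's theorem has to supply. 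Then
\[
\Bigl|\int \varphi_1(x_1)\cdots\varphi_{k-1}(x_{k-1})\,(\varphi_k-p_n)(x_k)\,q\,d\mu\Bigr|\le \Bigl(\prod_{j<k}\|\varphi_j\|_\infty\Bigr)\|\varphi_k-p_n\|_{L^2(\mu_k)}\Bigl(\int q^2\,d\mu\Bigr)^{1/2}\to 0.
\]
The same bound holds for $\nu$, because $\nu_k=\mu_k$ and $\int q^2\,d\nu=\int q^2\,d\mu$. The remaining terms agree by the induction hypothesis applied to the polynomial $p_n(x_k)\,q$. Now take $k=d$, $q=1$ and $\varphi_j=\chi_{B_j}$ for Borel sets $B_j$. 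This gives $\mu(B_1\times\cdots\times B_d)=\nu(B_1\times\cdots\times B_d)$, hence $\mu=\nu$. Neither the Fourier step nor any weighted density is needed.
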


\begin{remark}
This theorem is conceptually important because it allows multidimensional uniqueness to be reduced to one-dimensional criteria. In applications, one often verifies Carleman-type divergence conditions for each coordinate marginal.
\end{remark}

Another route to determinacy uses operator theory. Nussbaum's quasi-analytic criterion interprets determinacy in terms of essential self-adjointness of multiplication operators generated by the moment functional. This perspective fits naturally with the GNS construction described above.

\section{The truncated \(K\)-moment problem}\label{sec:truncated}

The full \(K\)-moment problem asks for representation of an entire linear functional on \(\Rxd\). In many applications one only knows finitely many moments.

\begin{definition}
Fix \(n\in\N\). A \emph{truncated moment sequence of degree \(2n\)} is a family
\[
y=(y_\alpha)_{|\alpha|\le 2n}, \qquad y_\alpha\in\R.
\]
It admits a \emph{\(K\)-representing measure} if there exists a positive Borel measure \(\mu\) supported on \(K\) such that
\[
y_\alpha=\int_K x^\alpha\,d\mu(x) \qquad (|\alpha|\le 2n).
\]
\end{definition}

To such data one associates a truncated moment matrix \(M_n(y)\), indexed by monomials of degree at most \(n\), via
\[
M_n(y)_{\alpha,\beta}=y_{\alpha+\beta}.
\]
For semialgebraic support constraints one also introduces localizing matrices corresponding to the defining polynomials of \(K\). Positivity of these matrices is necessary for the existence of a \(K\)-representing measure and, under additional rank conditions, often sufficient.

\begin{theorem}[Flat extension principle, schematic form]\label{thm:flat}
Let \(y=(y_\alpha)_{|\alpha|\le 2n}\) be a truncated moment sequence. Assume that the moment matrix \(M_n(y)\) is positive semidefinite and admits a positive semidefinite extension \(M_{n+1}(\widetilde y)\) such that
\[
\rank M_{n+1}(\widetilde y)=\rank M_n(y).
\]
Then \(y\) has a finitely atomic representing measure with exactly \(\rank M_n(y)\) atoms.
\end{theorem}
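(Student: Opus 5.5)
The plan is the Curto--Fialkow argument: turn the flat extension into a finite-dimensional GNS construction, then read off the atoms as joint eigenvalues.

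\emph{Step 1 (iterating the flat extension).} A positive semidefinite matrix with a flat extension extends further. Write
\[
M_{n+1}(\widetilde y)=\begin{pmatrix} M_n & B\\ B^T & C\end{pmatrix}.
\]
Flatness means $B=M_nW$ and $C=W^TM_nW$ for some matrix $W$. Equivalently, every column indexed by a monomial of degree $n+1$ is a linear combination of columns of degree $\le n$. I will use the recursive column relations: if a polynomial relation $p(X)=0$ holds among the columns of $M_{n+1}$, then so does $(x_jp)(X)=0$. From these one defines $M_{n+2}$ with the same rank and checks that it is positive semidefinite. Iterating gives a positive semidefinite infinite moment matrix $M_\infty$ with
\[
\rank M_\infty=r:=\rank M_n(y).
\]
This is the step I expect to be the main obstacle. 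One has to check that the new moments are well defined: a matrix entry must depend only on $\alpha+\beta$, i.e. the Hankel structure must be preserved. This is where the column-relation/recursive-generation lemma does the real work.

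\emph{Step 2 (finite GNS space).} The extended sequence defines a linear functional $L$ on $\Rxd$, and $L(h^2)\ge0$ for every $h$. I then repeat the GNS construction of Section \ref{sec:operator}. The kernel $N=\{p: L(p^2)=0\}$ is an ideal, and $\Rxd/N$ has dimension $r$. It is spanned by the classes of monomials of degree $\le n$, since the higher-degree columns are combinations of these. The multiplication operators $M_j$ are then commuting symmetric operators on an $r$-dimensional real inner product space. No Archimedean hypothesis is needed, because boundedness is automatic in finite dimension.

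\emph{Step 3 (atoms).} I simultaneously diagonalize the $M_j$ using an orthonormal joint eigenbasis $e_1,\dots,e_r$ with joint eigenvalues $\lambda^{(k)}\in\R^d$. Writing $v=1+N=\sum_k c_ke_k$, we get
\[
L(x^\alpha)=\ip{v}{M^\alpha v}=\sum_k c_k^2(\lambda^{(k)})^\alpha .
\]
So $\mu=\sum_k c_k^2\,\delta_{\lambda^{(k)}}$ represents $y$.

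It remains to show that $\mu$ has exactly $r$ atoms. The map $p+N\mapsto (p(\lambda^{(k)}))_k$ is an isometry from $\Rxd/N$ onto $L^2(\mu)$, after weighting. Hence
\[
r=\dim L^2(\mu)=\#\supp\mu .
\]
This forces all $c_k\ne0$ and the points $\lambda^{(k)}$ to be distinct.

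For a $K$-version one would add positive semidefiniteness of the localizing matrices to place the atoms in $K$. The statement as given does not require this.
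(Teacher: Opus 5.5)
The paper gives no proof of this statement. It is stated in schematic form and attributed to Curto and Fialkow, and your outline follows their route: extend flatly to an infinite moment matrix, then run a finite-rank GNS and spectral argument.

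\textbf{Steps 2 and 3.} Granting an infinite positive semidefinite extension of rank $r$, these steps are correct, including the count of exactly $r$ atoms. Injectivity of the isometry into $L^2(\mu)$ already suffices there, since $\mu$ visibly has at most $r$ atoms.

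\textbf{The gap is Step 1.} It is the whole content of the Curto--Fialkow extension theorem, and you announce it rather than prove it.
The recursive relation you quote only makes sense for $\deg p\le n$; for $\deg p=n+1$ there is no column indexed by $x_jp$ in $M_{n+1}$. On its own it also does not give the Hankel structure of $M_{n+2}$. Suppose you define the column of $x^\alpha$, $|\alpha|=n+2$, by applying $x_j$ to a degree-$\le n$ reduction of $x^{\alpha-e_j}$. You must then show two things:
\begin{enumerate}[label=\textup{(\alph*)}, leftmargin=2em]
\item the result is independent of $j$ and of the reduction chosen;
\item the new entries depend only on the sum of the indices.
\end{enumerate}
That is a genuine commutativity statement, and nothing in your sketch supplies it. Positive semidefiniteness, by contrast, is automatic once the new block has the form $W^TM_{n+1}W$.

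\textbf{A repair that also removes Step 1.} The missing commutativity is easy to prove at the truncated level.
Write $\R[x]_k$ for the polynomials in $x_1,\dots,x_d$ of degree $\le k$. Let $L$ be the linear functional of $\widetilde y$ on $\R[x]_{2n+2}$, and let $N\subseteq\R[x]_{n+1}$ be the kernel of $M_{n+1}(\widetilde y)$. Put $V=\R[x]_n/(N\cap\R[x]_n)$ with $\langle[p],[q]\rangle=L(pq)$; then $\dim V=r$.
\begin{itemize}[leftmargin=2em]
\item Flatness gives $\R[x]_{n+1}=\R[x]_n+N$.
\item $L(fg)$ depends only on the classes of $f,g\in\R[x]_{n+1}$ modulo $N$.
\item Hence $M_j[p]:=[x_jp]$ is a well-defined operator on $V$. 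This is exactly where your recursive relation, with $\deg p\le n$, is used.
\item Each $M_j$ is symmetric.
\item $\langle M_iM_j[p],[q]\rangle=\langle M_j[p],M_i[q]\rangle=L(x_jp\,x_iq)$ is symmetric in $i$ and $j$. So the $M_j$ commute, and only moments of degree $\le 2n+2$ are used.
\end{itemize}
Your Step 3 then runs essentially verbatim on $V$, with $\langle[1],M^{\beta+\gamma}[1]\rangle=\langle[x^\beta],[x^\gamma]\rangle=y_{\beta+\gamma}$ for $|\beta|,|\gamma|\le n$. The infinite flat extension of Step 1 is then recovered afterwards as the moment matrix of the measure you obtain.
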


\begin{remark}
This criterion, developed by Curto and Fialkow and later generalized in several directions, is one of the major structural results in the truncated moment problem. It lies behind many algorithms in polynomial optimization and semidefinite programming.
\end{remark}

The truncated theory is not merely a finite-dimensional approximation of the full moment problem. It has its own distinct geometry, because rank, kernels of moment matrices, and algebraic relations among atoms all become visible at finite order. In practice, Lasserre's hierarchy and related SOS relaxations exploit exactly this finite-dimensional viewpoint.

\section{Positive polynomials and sums of squares}\label{sec:sos}

The \(K\)-moment problem is inseparable from the theory of positive polynomials.

\subsection{Global positivity}

\begin{definition}
A polynomial \(p\in\Rxd\) is called \emph{positive semidefinite} if \(p(x)\ge 0\) for all \(x\in\R^d\). We denote the cone of such polynomials by \(\Pos(\R^d)\).
\end{definition}

Every sum of squares is positive semidefinite, so \(\sos\subseteq \Pos(\R^d)\). Equality holds in one variable but fails dramatically in several variables.

\begin{proposition}\label{prop:univariate}
For a nonzero polynomial \(p\in\R[x]\), the following are equivalent.
\begin{enumerate}[label=\textup{(\roman*)}, leftmargin=2em]
\item \(p(x)\ge 0\) for all \(x\in\R\).
\item Every real root of \(p\) has even multiplicity and the leading coefficient of \(p\) is positive.
\item \(p\) is a sum of two squares in \(\R[x]\).
\end{enumerate}
\end{proposition}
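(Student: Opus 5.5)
I will prove the cycle of implications (iii)\(\Rightarrow\)(i)\(\Rightarrow\)(ii)\(\Rightarrow\)(iii). The first is immediate, since a sum of two squares of real polynomials is pointwise nonnegative on \(\R\).

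For (i)\(\Rightarrow\)(ii), I will argue by contradiction on each condition separately. If \(p\) had a real root \(a\) of odd multiplicity \(m\), write \(p(x)=(x-a)^m r(x)\) with \(r(a)\neq 0\). By continuity \(r\) has constant sign near \(a\), while \((x-a)^m\) changes sign at \(a\), so \(p\) takes negative values near \(a\). For the leading coefficient, \(p(x)\) has the sign of its leading term \(c x^n\) for \(|x|\) large. Nonnegativity on \(\R\) then forces \(c>0\); it also forces \(n\) to be even, since otherwise the sign flips between \(+\infty\) and \(-\infty\).

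For (ii)\(\Rightarrow\)(iii), the plan is to factor \(p\) over \(\mathbb{C}\). Group the real roots \(a_k\), each of even multiplicity \(2m_k\), and the non-real roots, which occur in conjugate pairs \(z_j,\bar z_j\). This gives
\[
p(x)=c\prod_k (x-a_k)^{2m_k}\prod_j (x-z_j)(x-\bar z_j), \qquad c>0.
\]
Set \(h(x):=\prod_j (x-z_j)\in\mathbb{C}[x]\). Then \(\prod_j (x-z_j)(x-\bar z_j)=h(x)\overline{h}(x)=|h(x)|^2\) for real \(x\). Writing \(h=u+iv\) with \(u,v\in\R[x]\), this equals \(u^2+v^2\). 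Next put \(s(x):=\sqrt{c}\prod_k (x-a_k)^{m_k}\in\R[x]\), so that
\[
p=s^2(u^2+v^2)=(su)^2+(sv)^2,
\]
which is a sum of two squares.

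The only step needing care is this last one. One must justify that the non-real roots pair off with equal multiplicities, which holds because \(p\) has real coefficients, so \(\overline{p(z)}=p(\bar z)\). One must also check that \(u\) and \(v\) are real polynomials with \(h(x)\overline{h}(x)=u(x)^2+v(x)^2\) as a polynomial identity, not merely pointwise. This follows from the identity \((u+iv)(u-iv)=u^2+v^2\) in \(\mathbb{C}[x]\).
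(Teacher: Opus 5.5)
Your proof is correct and follows essentially the paper's route: the paper factors $p$ over $\R$ into even powers of linear factors and irreducible quadratics and combines the quadratics using the two-squares identity, and your choice $h=u+iv$ with $h\overline{h}=u^2+v^2$ is that same identity (multiplicativity of $|\cdot|^2$ on $\mathbb{C}$) applied to all conjugate pairs at once. The one step you state a little quickly is that $z$ and $\bar z$ have equal multiplicities; this follows by conjugating the coefficients of the complex factorization of $p$ (using $\overline{p}=p$) and invoking unique factorization in $\mathbb{C}[x]$.
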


\begin{proof}
The equivalence of \textup{(i)} and \textup{(ii)} follows from factorization over \(\R\). The implication \textup{(ii)}\(\Rightarrow\)\textup{(iii)} follows from factorization into linear factors with even multiplicity and irreducible quadratic factors, together with the classical identity expressing a product of two sums of two squares as a sum of two squares. The converse is immediate.
\end{proof}

\begin{theorem}[Hilbert]\label{thm:hilbert}
Let \(\mathcal{P}_{d,2m}\) denote the cone of homogeneous forms of degree \(2m\) in \(d\) variables that are nonnegative on \(\R^d\), and let \(\Sigma_{d,2m}\) be the cone of sums of squares of forms of degree \(m\). Then
\[
\mathcal{P}_{d,2m}=\Sigma_{d,2m}
\]
if and only if one of the following holds:
\begin{enumerate}[label=\textup{(\roman*)}, leftmargin=2em]
\item \(d=2\),
\item \(2m=2\),
\item \((d,2m)=(3,4)\).
\end{enumerate}
\end{theorem}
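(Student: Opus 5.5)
The proof has two directions: equality of the two cones in the three listed cases, and a strict inclusion \(\Sigma_{d,2m}\subsetneq\mathcal{P}_{d,2m}\) in every remaining case. The inclusion \(\Sigma_{d,2m}\subseteq\mathcal{P}_{d,2m}\) is trivial, so only the reverse inclusion in cases (i)--(iii) and the counterexamples need work.

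\emph{Equality.} For \(d=2\), dehomogenize: a nonnegative binary form \(p(x,y)\) gives \(p(x,1)\ge0\) on \(\R\). By Proposition \ref{prop:univariate}, \(p(x,1)=a(x)^2+b(x)^2\) with \(\deg a,\deg b\le m\). Rehomogenizing to degree \(m\) expresses \(p\) as a sum of two squares of forms; if \(\deg p(x,1)<2m\), a power of \(y\) divides \(p\), and that power is even by nonnegativity. For \(2m=2\), a nonnegative quadratic form has a positive semidefinite symmetric matrix. Diagonalizing it orthogonally writes the form as \(\sum\lambda_i\ell_i^2\) with \(\lambda_i\ge0\) and linear forms \(\ell_i\). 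The case \((3,4)\) is Hilbert's ternary quartic theorem and is the genuine obstacle. I would follow the standard route: show that every form on the boundary of \(\mathcal{P}_{3,4}\) (one with a real zero) can be written as \(q_1^2+q_2^2+q_3^2\) by subtracting squares of quadratics and inducting on the zero structure. The alternative is Hilbert's degree/topological argument that the map \((q_1,q_2,q_3)\mapsto\sum q_i^2\) onto strictly positive quartics is proper with nonzero degree. Since this is an expository paper, I would likely cite Hilbert, or Pfister--Scheiderer for a modern elementary proof, rather than reproduce it.

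\emph{Strictness.} Here I exhibit two basic counterexamples and then propagate them.
\begin{enumerate}[label=\textup{(\alph*)}, leftmargin=2em]
\item For \((d,2m)=(3,6)\), take the Motzkin form \(M=x^4y^2+x^2y^4+z^6-3x^2y^2z^2\).
\begin{itemize}[leftmargin=1.5em]
\item Nonnegativity follows from AM--GM applied to the three monomials \(x^4y^2,\ x^2y^4,\ z^6\).
\item Not a sum of squares: in any representation \(M=\sum h_i^2\) with cubic forms \(h_i\), the Newton polytope forces each \(h_i\) to lie in the span of \(x^2y,\ xy^2,\ xyz,\ z^3\). Comparing coefficients of \(x^2y^2z^2\) then gives \(\sum(\text{coeff of }xyz)^2=-3\), a contradiction.
\end{itemize}
\item For \((d,2m)=(4,4)\), use the Choi--Lam form \(w^4+x^2y^2+y^2z^2+z^2x^2-4xyzw\). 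Nonnegativity is again AM--GM, and the same monomial/Newton-polytope argument shows it is not a sum of squares.
\end{enumerate}

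To reach all remaining \((d,2m)\), namely \(d\ge3,\ 2m\ge6\) or \(d\ge4,\ 2m\ge4\), I propagate these two examples. Adding variables preserves non-SOS: if \(p(x_1,\dots,x_d)=\sum h_i^2\), setting the extra variables to zero gives a sum-of-squares representation of the original. Raising the degree uses multiplication by \(x_1^{2k}\): if \(x_1^{2k}p=\sum h_i^2\), every \(h_i\) vanishes on \(x_1=0\), so \(x_1\mid h_i\), and induction on \(k\) gives \(p\in\Sigma\). Hence \(x_1^{2k}M\) and \(x_1^{2k}\cdot(\text{Choi--Lam})\) are psd but not SOS in every needed bidegree. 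The main effort is therefore the \((3,4)\) case, and the Newton-polytope bookkeeping for the counterexamples is routine.
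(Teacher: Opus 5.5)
The paper gives no proof of this theorem. It quotes it as Hilbert's classical result, and the only supporting argument in the text is the missing-monomial proof that the dehomogenized Motzkin polynomial $1-3x^2y^2+x^2y^4+x^4y^2$ is not a sum of squares. So your proposal is not a variant of the paper's route; it is an argument the paper omits.

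\textbf{Motzkin case.} Your step (a) is exactly the paper's argument in homogeneous form: the admissible monomials $1,xy,x^2y,xy^2$ become $z^3,xyz,x^2y,xy^2$. The paper's example in fact implies your claim directly. A representation $M=\sum h_i^2$ with cubic forms would give one for $M(x,y,1)$, which is precisely the paper's polynomial.

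\textbf{The remaining steps.} Everything else in your proposal is correct.
\begin{itemize}[leftmargin=2em]
\item For $d=2$, homogenizing $a,b$ to degree $m$ already works, so your remark about powers of $y$ is superfluous.
\item The case $2m=2$ is fine as written.
\item For the Choi--Lam form in case $(4,4)$, the half Newton polytope allows only $w^2,xy,yz,zx$. Hence the coefficient of $xyzw$ in $\sum h_i^2$ is $0\neq -4$. This is a slightly different contradiction from the Motzkin one, not literally ``the same'' argument.
\item The two propagation steps are correct, and together they reach exactly the pairs with $d\ge 3$ not covered by (i)--(iii).
\end{itemize}

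\textbf{Ternary quartics.} Like the paper, you defer this case to the literature. For the statement at hand you only need sums of squares, not three squares. It therefore suffices to subtract $\lambda(x^2+y^2+z^2)^2$ with $\lambda$ maximal, and then prove that psd ternary quartics with a nontrivial real zero are sums of squares.

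\textbf{A correction to the statement.} Your list of ``remaining cases'' silently omits $d=1$, and rightly so. For every $m$ we have $\mathcal{P}_{1,2m}=\{cx^{2m}:c\ge 0\}=\Sigma_{1,2m}$. So the ``only if'' as printed fails for $d=1$, $2m\ge 4$, and condition (i) should read $d\le 2$. What you actually prove is this corrected statement.
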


\begin{example}[Motzkin polynomial]
The polynomial
\[
m(x,y):=1-3x^2y^2+x^2y^4+x^4y^2
\]
is nonnegative on \(\R^2\) but is not a sum of squares in \(\R[x,y]\).
\end{example}

\begin{proof}
Nonnegativity follows from the arithmetic--geometric mean inequality applied to \(1\), \(x^2y^4\), and \(x^4y^2\), which gives
\[
1+x^2y^4+x^4y^2\ge 3x^2y^2.
\]
Hence \(m(x,y)\ge 0\). If \(m=\sum_k h_k^2\), then each \(h_k\) has degree at most \(3\), and inspection of the missing monomials forces
\[
h_k=a_k+b_kxy+c_kx^2y+d_kxy^2.
\]
But then the coefficient of \(x^2y^2\) in \(\sum_k h_k^2\) is \(\sum_k b_k^2\ge 0\), whereas in \(m\) it equals \(-3\), a contradiction.
\end{proof}

\subsection{Positivity on semialgebraic sets}

When positivity is restricted to a proper subset \(K\subseteq\R^d\), the relevant cone is no longer just \(\sos\), but the algebraic cone adapted to \(K\). For example,
\[
\Pos([0,\infty))=\{\sigma_0+x\sigma_1: \sigma_0,\sigma_1\in\sosd\},
\]
and for \(a<b\),
\[
\Pos([a,b])=\{\sigma_0+(x-a)\sigma_1+(b-x)\sigma_2: \sigma_0,\sigma_1,\sigma_2\in\sosd\}.
\]
Thus in one variable the positivity cone on basic semialgebraic sets is often exactly a finitely generated quadratic module.

In higher dimension exact equality is rare.

\begin{definition}
The preordering \(T(f)\) is said to be \emph{saturated} if
\[
T(f)=\Pos(K(f)).
\]
\end{definition}

\begin{remark}
Saturation is exceptional in dimensions \(d\ge 2\). Scheiderer's work shows that one should not expect it in general when the set has nonempty interior. The compact moment theorems avoid this difficulty by using strict positivity rather than exact representation of all nonnegative polynomials.
\end{remark}

A classical global consequence of Theorem \ref{thm:ks} is Artin's solution of Hilbert's seventeenth problem.

\begin{theorem}[Artin]
If \(p\in\Rxd\) is nonnegative on \(\R^d\), then \(p\) is a sum of squares of rational functions.
\end{theorem}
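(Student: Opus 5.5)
We derive Artin's theorem from the Krivine--Stengle Positivstellensatz (Theorem \ref{thm:ks}) by encoding negativity of \(p\) as a semialgebraic condition. Apply part (2) of Theorem \ref{thm:ks} with the empty family, \(s=0\), so that \(K(f)=\R^d\) and \(T(f)=\sos\). If one prefers a nonempty family, take \(f_1=1\); then \(T(f)=\sos\) as well.

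Since \(p\ge 0\) on \(K(f)=\R^d\), Theorem \ref{thm:ks}(2) gives \(\sigma,\tau\in\sos\) and \(m\in\N\) with
\[
\sigma p=p^{2m}+\tau .
\]
The case \(p=0\) is trivial, so assume \(p\neq 0\). Then the right-hand side \(p^{2m}+\tau\) is a nonzero polynomial: it is a sum of squares containing the nonzero square \((p^m)^2\), and a sum of real squares of polynomials vanishes identically only if every summand does. Hence \(\sigma\neq 0\), and in \(\R(x_1,\dots,x_d)\) we may divide to get
\[
p=\frac{p^{2m}+\tau}{\sigma}=\frac{\sigma\,(p^{2m}+\tau)}{\sigma^2}.
\]
The numerator \(\sigma(p^{2m}+\tau)\) is a product of two sums of squares, hence a sum of squares of polynomials \(h_k\). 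Therefore
\[
p=\sum_k \left(\frac{h_k}{\sigma}\right)^2
\]
is a sum of squares of rational functions.

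The only real obstacle is logical rather than computational. Krivine--Stengle is normally proved using Artin--Schreier theory, which is essentially Artin's theorem itself. The paper, however, states Theorem \ref{thm:ks} as an available result and calls Artin's theorem a consequence of it, so invoking it is legitimate here.

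The remaining point to check is the nonvanishing of \(\sigma\), which is exactly the argument that real sums of squares vanish only trivially. An alternative route, closer to Artin's original argument, would use part (4): if \(p\) were not a sum of squares in the field, one would pass to an ordering of \(\R(x)\) in which \(p<0\) and use the Artin--Lang homomorphism theorem to produce a real point with \(p<0\). Since that theorem is not stated in the paper, I would use the route through Theorem \ref{thm:ks}(2) above.
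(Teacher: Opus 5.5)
Your proof is correct and is essentially the paper's own argument: apply Theorem~\ref{thm:ks}\textup{(2)} with the empty family to obtain $\sigma p=p^{2m}+\tau$ with $\sigma,\tau\in\sos$, then divide in $\R(x_1,\dots,x_d)$. You also supply two details the paper leaves implicit, namely that $\sigma\neq 0$ and that the quotient is a sum of squares via $p=\sigma(p^{2m}+\tau)/\sigma^2$.
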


\begin{proof}
Apply Theorem \ref{thm:ks}\textup{(2)} with the empty family \(f=\varnothing\). Then there exist sums of squares \(s,t\) and \(m\ge 1\) such that
\[
sp=p^{2m}+t.
\]
Thus
\[
p=\frac{p^{2m}+t}{s},
\]
which is a sum of squares in \(\R(x_1,\dots,x_d)\).
\end{proof}

\section{The algebraic-set case}\label{sec:algsets}

The case of algebraic varieties is especially transparent because equalities are encoded by an ideal rather than by inequalities.

\subsection{Preorderings on algebraic sets}

Suppose
\[
V=\{x\in\R^d: f_1(x)=\cdots=f_m(x)=0\}.
\]
As a basic closed semialgebraic set,
\[
V=K(f_1,-f_1,\dots,f_m,-f_m).
\]
The corresponding preordering has a very simple description.

\begin{proposition}\label{prop:algpreorder}
Let
\[
f=(f_1,-f_1,\dots,f_m,-f_m)\subseteq\Rxd.
\]
Then
\[
T(f)=\sos+(f_1,\dots,f_m),
\]
where \((f_1,\dots,f_m)\) denotes the ideal generated by \(f_1,\dots,f_m\).
\end{proposition}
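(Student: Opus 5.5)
We prove the two inclusions separately, writing \(I:=(f_1,\dots,f_m)\) and \(g_{2k-1}:=f_k\), \(g_{2k}:=-f_k\) for the \(2m\) generators.

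For \(\sos+I\subseteq T(f)\), note first that \(\sos\subseteq T(f)\), since \(T(f)\) contains the term with \(e=0\). Since \(T(f)\) is closed under addition, it then suffices to show that every element of \(I\) lies in \(T(f)\). A general element of \(I\) is a finite sum \(\sum_k r_k f_k\) with \(r_k\in\Rxd\), so it is enough to treat a single term \(rf_k\). Here we use the identity
\[
r=\Bigl(\frac{r+1}{2}\Bigr)^2-\Bigl(\frac{r-1}{2}\Bigr)^2,
\]
which gives
\[
rf_k=\Bigl(\frac{r+1}{2}\Bigr)^2 f_k+\Bigl(\frac{r-1}{2}\Bigr)^2(-f_k).
\]
This is an element of the quadratic module generated by \(f_k\) and \(-f_k\), and that module is contained in \(T(f)\).

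For \(T(f)\subseteq \sos+I\), we show that each summand \(\sigma_e g_1^{e_1}\cdots g_{2m}^{e_{2m}}\) of an element of \(T(f)\) lies in \(\sos+I\), and then use that \(\sos+I\) is closed under addition. There are two cases. If \(e=0\), the summand is just \(\sigma_0\in\sos\). If \(e\neq 0\), the product contains at least one factor \(\pm f_k\), so it lies in \(I\); since \(I\) is an ideal, multiplying by \(\sigma_e\) keeps the summand in \(I\subseteq \sos+I\).

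Every step is routine. The only point needing care is the first inclusion: an arbitrary, possibly non-square, multiple \(rf_k\) must be written using only square coefficients. The difference-of-squares identity above does this, and it is where the presence of both \(f_k\) and \(-f_k\) among the generators is essential.
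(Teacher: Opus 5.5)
Your proof is correct and matches the paper's argument. The inclusion into $\sos+(f_1,\dots,f_m)$ holds because every summand with $e\neq 0$ lies in the ideal. The reverse inclusion uses the same difference-of-squares identity as the paper, $pf_j=\frac14\bigl((p+1)^2f_j+(p-1)^2(-f_j)\bigr)$, together with $\sos\subseteq T(f)$.
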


\begin{proof}
Every generator appearing in \(T(f)\) is either a sum of squares or a product involving at least one of the \(f_j\), so
\[
T(f)\subseteq \sos+(f_1,\dots,f_m).
\]
For the reverse inclusion, it suffices to show that each product \(pf_j\) belongs to \(T(f)\). But
\[
pf_j=\frac14\Bigl((p+1)^2f_j+(p-1)^2(-f_j)\Bigr),
\]
and the right-hand side lies in the quadratic module generated by \(f\), hence in \(T(f)\). Since \(\sos\subseteq T(f)\), the result follows.
\end{proof}

\begin{corollary}\label{cor:algsetepsilon}
Let
\[
V=\{x\in\R^d: f_1(x)=\cdots=f_m(x)=0\}.
\]
If \(p\in\Rxd\) satisfies \(p\ge 0\) on \(V\), then for every \(\eps>0\) there exist \(\sigma\in\sos\) and \(h\in (f_1,\dots,f_m)\) such that
\[
p+\eps=\sigma+h.
\]
\end{corollary}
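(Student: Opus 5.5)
The natural route is to realize \(V\) as the basic closed semialgebraic set \(K(f)\) for the doubled family \(f=(f_1,-f_1,\dots,f_m,-f_m)\), combine Schm\"udgen's strict positivity theorem (Theorem \ref{thm:schmudgenpositive}) with the explicit description of \(T(f)\) in Proposition \ref{prop:algpreorder}, and read off the decomposition. Schm\"udgen's theorem needs \(K(f)\) to be compact, so the argument will be carried out under the standing assumption that \(V\) is compact. This hypothesis is needed; see the last paragraph.

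\emph{Step 1.} First I check that \(V=K(f)\). This holds because \(f_j(x)\ge 0\) and \(-f_j(x)\ge 0\) together are equivalent to \(f_j(x)=0\).

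\emph{Step 2.} Fix \(\eps>0\). Since \(p\ge 0\) on \(V\), the polynomial \(p+\eps\) satisfies \(p+\eps\ge\eps>0\) on \(K(f)\). If \(V=\varnothing\), Theorem \ref{thm:ks}(4) gives \(-1\in T(f)\). Then \(T(f)=\Rxd\), since \(a=\bigl(\tfrac{a+1}{2}\bigr)^2+(-1)\bigl(\tfrac{a-1}{2}\bigr)^2\in T(f)\) for every \(a\), and the claim is trivial.

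\emph{Step 3.} If \(V\neq\varnothing\), Theorem \ref{thm:schmudgenpositive} applied to \(g=p+\eps\) gives \(p+\eps\in T(f)\).

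\emph{Step 4.} Proposition \ref{prop:algpreorder} identifies \(T(f)=\sos+(f_1,\dots,f_m)\). This yields \(\sigma\in\sos\) and \(h\in(f_1,\dots,f_m)\) with \(p+\eps=\sigma+h\), which is the assertion.

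The main obstacle is the compactness hypothesis. Without it, Step 3 has nothing to invoke, and the statement actually fails. With the empty family (\(m=0\), \(V=\R^2\)), take the Motzkin polynomial \(m(x,y)\). The argument given in the Motzkin example applies verbatim to \(m+\eps\): the monomial-support restrictions are unchanged, and the coefficient of \(x^2y^2\) is still \(-3\). Hence \(m+\eps\) is never a sum of squares.

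So in the write-up I would state explicitly that \(V\) is assumed compact, or more generally that the preordering \(\sos+(f_1,\dots,f_m)\) is Archimedean. In the Archimedean case, Proposition \ref{prop:compactimpliesarch} is replaced by the assumption itself, and the separation argument of Theorem \ref{thm:schmudgenpositive} goes through unchanged. Apart from this point, every step is a direct appeal to results already established.
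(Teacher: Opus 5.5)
Your argument is the paper's own proof: Schm\"udgen's theorem for the doubled family $(f_1,-f_1,\dots,f_m,-f_m)$, followed by Proposition~\ref{prop:algpreorder}. You are also right that it needs $V$ compact. The paper invokes Theorem~\ref{thm:schmudgenpositive} without that hypothesis, and your Motzkin example shows the corollary as stated is false for noncompact $V$; it works equally with $f_1=0$ if a nonempty family is required.

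One small correction: your Archimedean variant is not more general. By Corollary~\ref{cor:archpoly} it gives $\lambda-\sum_j x_j^2\in\sos+(f_1,\dots,f_m)$, which confines $V$ to a ball, so by Proposition~\ref{prop:compactimpliesarch} the two hypotheses are equivalent.
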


\begin{proof}
Apply Schm\"udgen's strict positivity theorem to the family \((f_1,-f_1,\dots,f_m,-f_m)\). The polynomial \(p+\eps\) is strictly positive on \(V\), hence belongs to the associated preordering. Proposition \ref{prop:algpreorder} then gives the desired decomposition.
\end{proof}

\begin{remark}
This says that, modulo the vanishing ideal of the variety, nonnegative polynomials can be approximated arbitrarily well by sums of squares. Positivity on an algebraic set is therefore positivity in the quotient ring up to an \(\eps\)-perturbation.
\end{remark}

\subsection{Gram matrices and a quantitative bound}

\begin{theorem}\label{thm:grambound}
Let \(p\in\Rxd\) be a sum of squares of degree \(2m\). Then \(p\) can be written as a sum of at most
\[
\binom{d+m}{m}
\]
squares of polynomials.
\end{theorem}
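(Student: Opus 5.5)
The plan is to use the Gram matrix representation of a sum of squares. Let \(v_m(x)\) denote the column vector of all monomials \(x^\alpha\) with \(|\alpha|\le m\). This vector has exactly \(N:=\binom{d+m}{m}\) entries. The argument has three steps.

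\textbf{Step 1 (degree bound).} Write \(p=\sum_{k=1}^r h_k^2\) with \(h_k\in\Rxd\). Each \(h_k\) has degree at most \(m\). Indeed, let \(D:=\max_k\deg h_k\) and suppose \(D>m\). Collect the top-degree homogeneous parts \(h_k^{(D)}\) of those \(h_k\) with \(\deg h_k=D\). The degree-\(2D\) component of \(p\) is then \(\sum_k (h_k^{(D)})^2\). This is a nonzero form, because it is a sum of squares of nonzero real forms and so is strictly positive at some point of \(\R^d\). That contradicts \(\deg p=2m<2D\). This step is the only place where any care is needed: no cancellation can occur among the leading terms. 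I would spell it out as above, by evaluating at a point where some \(h_k^{(D)}\) does not vanish.

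\textbf{Step 2 (Gram matrix).} Since \(\deg h_k\le m\), we can write \(h_k=c_k^{T}v_m\) with \(c_k\in\R^N\). Then
\[
p=\sum_k v_m^T c_k c_k^T v_m = v_m^T G\, v_m, \qquad G:=\sum_{k=1}^r c_kc_k^T.
\]
The matrix \(G\) is a real symmetric positive semidefinite \(N\times N\) matrix, and its rank is at most \(N\).

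\textbf{Step 3 (re-factor \(G\)).} By the spectral theorem, \(G=\sum_{i=1}^{\rho}\lambda_i u_iu_i^T\), where \(\rho=\rank G\le N\), the \(\lambda_i\) are positive, and the \(u_i\) are orthonormal. Setting \(q_i:=\sqrt{\lambda_i}\,u_i^Tv_m\) gives
\[
p=\sum_{i=1}^{\rho} q_i^2,
\]
a sum of at most \(\rho\le\binom{d+m}{m}\) squares, as claimed. Equivalently, one can use a Cholesky-type factorization \(G=B^TB\) with \(B\) having \(\rank G\) rows.
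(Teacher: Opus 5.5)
Your proof is correct and follows the paper's argument: write \(p=v_m^{\mathsf T}Gv_m\) with the positive semidefinite Gram matrix \(G=\sum_k c_kc_k^{\mathsf T}\), indexed by the \(\binom{d+m}{m}\) monomials of degree at most \(m\), then diagonalize \(G\) to obtain at most \(\rank G\) squares. Your Step 1 also supplies the no-cancellation argument for \(\deg h_k\le m\), which the paper only asserts.
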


\begin{proof}
Let \(v_m(x)\) be the column vector of all monomials of degree at most \(m\); its length is
\[
N_m:=\binom{d+m}{m}.
\]
Since \(p\) has degree at most \(2m\), there exists a symmetric matrix \(G\in M_{N_m}(\R)\) such that
\[
p(x)=v_m(x)^{\mathsf T}Gv_m(x).
\]
If \(p=\sum_{k=1}^r h_k^2\), then each \(h_k\) has degree at most \(m\) and can be written as \(h_k(x)=c_k^{\mathsf T}v_m(x)\). Hence
\[
p(x)=v_m(x)^{\mathsf T}\Bigl(\sum_{k=1}^r c_kc_k^{\mathsf T}\Bigr)v_m(x),
\]
so \(p\) admits a positive semidefinite Gram matrix. Diagonalizing \(G\) gives
\[
G=U^{\mathsf T}\diag(\lambda_1,\dots,\lambda_{N_m})U
\]
with \(\lambda_j\ge 0\). Therefore
\[
p(x)=\sum_{j=1}^{N_m}\lambda_j\,\ell_j(x)^2,
\]
where the \(\ell_j\) have degree at most \(m\). Terms with \(\lambda_j=0\) vanish, so the number of nonzero squares is at most \(N_m\).
\end{proof}

\end{document}